\title{Relaxation of stochastic dominance constraints via optimal mass transport}
\author{Darinka Dentcheva \\
        Department of Mathematical sciences \\
        Stevens Institute of Technology \\
        Hoboken, NJ 07030, USA \\
        \texttt{darinka.dentcheva@stevens.edu}
        \And 
        Yunxuan Yi \\
        Department of Mathematical sciences \\
        Stevens Institute of Technology \\
        Hoboken, NJ 07030, USA \\
        \texttt{yyi1@stevens.edu}
        }
\date{}
\DeclareMathOperator{\di}{d}
\DeclareMathOperator{\dist}{dist}
\DeclareMathOperator{\conv}{co}
\DeclareMathOperator{\sign}{sign}
\renewcommand{\epsilon}{\varepsilon}
\newcommand{\Rb}{\mathbb{R}}
\newcommand{\Eb}{\mathbb{E}}
\newcommand{\Nb}{\mathbb{N}}
\newcommand{\Ac}{\mathcal{A}}
\newcommand{\D}{{\rm D}}
\newcommand{\Fc}{\mathcal{F}}
\newcommand{\Lc}{\mathcal{L}}
\newcommand{\Kc}{\mathcal{K}}
\newcommand{\Xc}{\mathcal{X}}
\newcommand{\Pc}{\mathcal{P}}
\newcommand{\Xf}{\mathfrak{X}}
\newcommand{\ssd}{\succeq_{(2)}}
\newcommand{\one}{\hspace{-0.1em}\mathbb{1}}
\newtheorem{definition}{Definition}
\newtheorem{theorem}{Theorem}
\newtheorem{lemma}{Lemma}
\newtheorem{corollary}{Corollary}
\newtheorem{assumption}{Assumption}
\begin{document}
\maketitle

\begin{abstract}
Optimization problems with stochastic dominance constraints provide a possibility to shape risk by selecting a benchmark random outcome with a desired distribution. The comparison of the relevant random outcomes to the respective benchmarks requires functional inequalities between the distribution functions or their transforms. A difficulty arises when no feasible decision results in a distribution that dominates the benchmark. Our paper addresses the problem of choosing a tight relaxation of the stochastic dominance constraint by selecting a feasible distribution, which is closest to those dominating the benchmark in terms of mass transportation distance. For the second-order stochastic dominance in a standard atomless space, we obtain new explicit formulae for the Monge-Kantorovich transportation distance of a given distribution to the set of dominating distributions.  We use our results to construct a numerical method for solving the relaxation problem.  Under an additional assumption, we also construct the associated projection of the distribution of interest onto the set of distributions dominating the benchmark.  For the stochastic dominance relations of order $r\in[1,\infty)$, we show a lower bound for the relevant mass transportation distance. Our numerical experience illustrates the efficiency of the proposed approach.  
\end{abstract}

\begin{keywords}
stochastic programming, Wasserstein distance, projection, stochastic dominance relaxation
\end{keywords}

\textbf{MSC}
90C15, 46E27, 28A33, 49J55

\section{Introduction}

Stochastic orders define comparison between random variables.  The first stochastic ordering, which is widely used until today, is introduced in the seminal paper \cite{lorenz1905methods}. This notion is related to the theory of weak majorization, see, e.g., \cite{arnold2012majorization} and is known as the second order stochastic dominance. A comparison of random variables with respect to the first-order stochastic dominance is used in the context of statistical tests in the early works \cite{mann1947test,Blackwell1951ComparisonOE,lehmann1952testing}. 
The second-order stochastic dominance  has received considerable attention in economics and in insurance due to the fact that it is consistent with risk-averse preferences. It also  enjoys many generalizations and extensions to vector-valued outcomes and processes. We credit some early works drawing attention to stochastic dominance without claiming to be exhaustive  \cite{quirk1962admissibility,1965PCFD,hanoch1969efficiency,hadar1969rules,rothschild1978increasing,Fishburn1976ContinuaOS,Blackwell1951ComparisonOE,Stoyan1969MonotonieeigenschaftenDK,rolski1976comparison,Stoyan1986DietrichC,Whitt1986ComparisonMF,Szekli1986OnTC}.
A thorough survey and analysis of stochastic order relations is contained in the monographs \cite{MuellerStoyan} and \cite{shaked2007stochastic}.

Stochastic dominance plays a prominent role in decision making under uncertainty.
Optimization problems with stochastic dominance constraints were first introduced in \cite{ddarsiam} and further analyzed in \cite{ddarmp}. Optimization with stochastic dominance constraints relate to many other risk-averse models such as optimization using coherent measures of risk (\cite{dentcheva2008duality}), utility functions (\cite{ddarsiam}), distortions (\cite{Dentcheva2006InverseSD}), chance constraints, or Average (Conditional) Value-at-Risk constraints (\cite{ddarport}). 
 The basic optimization problem reads as follows.
\begin{equation}
\label{dc-pgeneral}
\min \, f(x)\quad 
\text{s.t. }\,  G(x)\succeq Y,\quad x\in\Xc.
\end{equation}
Here $\Xc$ is a closed convex subset of the decision space $\Xf$, which is assumed to be a separable Banach space.
The random variable $Y\in \Lc_r(\Omega,\Fc,P)$ with $r\geq 0$ is a selected benchmark outcome. 
 The notation $\Lc_r(\Omega,\Fc,P)$ refers to the space of random variables with finite $r$-th moments that are indistinguishable on events of $P$-measure zero. For $r\in (0,1)$, the set $\Lc_r(\Omega,{\mathcal F},P)$ contains those random variables $Z$, for which $\Eb[|Z|^r]$ is finite although $\big(\Eb[|Z|^r]\big)^\frac{1}{r}$ is not a norm. The set $\Lc_0(\Omega,{\mathcal F},P)$ consists of all $\Fc$-measurable mappings from $\Omega$ to $\Rb$, which are indistinguishable on sets of $P$-measure zero. The mapping $G$ assigns to each decision $x\in\Xc$ a random variable such that for each $x\in\Xc$, $G(x)\in\Lc_r(\Omega,\Fc,P)$ holds.

The objective function  $f:\Xf\to \Rb$ is continuous and the relation $\succeq$ is a suitably chosen stochastic dominance relation. For extensive analysis and numerical methods addressing problem \eqref{dc-pgeneral}, we refer the reader to \cite{DDARriskbook}. 

 Frequently, a decision maker can point to a benchmark random outcome $Y$ with an acceptable distribution, for which also data is readily available. In this case, the dominance relation in problem \eqref{dc-pgeneral} controls risk by requiring that the random outcome of interest is ``better'' than the benchmark in the sense of the relation $\succeq$. 
 While in many practical situations, natural benchmarks are available, other applications may lack such options. For example, it is difficult to create a suitable benchmark for two-stage stochastic problems with a dominance constraint on the recourse function. This type of problems are discussed in \cite{gollmer2011note,gollmer2008stochastic,Dentcheva2012TwostageSO}. Even more difficult is the task of choosing a suitable benchmark process for the multi-stage sequential decision problems analyzed in \cite{Escudero2016,Escudero2018time,dentcheva2022risk} or in the setting presented in \cite{haskell2013stochastic}. 
 In other situations, an ideal benchmark might be selected, that might not be achievable but it is desirable. In that case, the optimization problem becomes infeasible but it is of great interest to determine a way of approaching the set of distributions dominating the ideal one. An example of this situation are problems related to medical applications such as radiation therapy design, see e.g., \cite{vitt2023deepest}. Motivated by various applications when the stochastic dominance relation is not satisfied, many researchers have proposed relaxations and approximations of the stochastic dominance relation. We refer to \cite{leshno2002preferred,lizyayev2012tractable} and \cite{junova2025measures} and to the references ibid. 
 In this paper, we propose a mathematically sound and simultaneously numerically efficient method to identify the weakest relaxation of the stochastic dominance constraint if the benchmark turns out to be too demanding. We focus specifically on the second-order stochastic dominance and point out that most of the results translate in a straightforward manner to the increasing-convex order constraints.   We base the relaxation of the dominance constraint on the optimal mass transportation by including the transportation distance as a penalty term into the objective function. In that setting, the decision maker is seeking  another variable $Z\succeq Y$, whose distribution is closest to that of $G(x)$ in terms of an optimal mass transportation distance. As the calculation of those distances is often computationally demanding, we obtain two new formulae for the Monge-Kantorovich distance (also frequently called Wasserstein distance) of first order when second-order stochastic dominance is used in the optimization problem. The projection is explicitly characterized under an additional assumption, which is always satisfied for data-driven optimization problems and for two-stage problems. For more information on optimal mass transportation theory, we refer to \cite{rachev2006mass}. 
Using our results, we propose an efficient numerical method for solving the optimization problem with the relaxed constraint. 
The method is based on the new calculation of the mass transportation distance and its sequential approximations. If the optimization problem with the given benchmark is feasible, then the proposed method can be viewed as an exact penalty method  with the transportation distance constituting the exact penalty term. 

Our paper is organized as follows. In section \ref{s:sd}, we recall the notions and results associated with the stochastic dominance relation.  In section \ref{s:lower_bound}, we focus on the optimization problem of calculating the transportation distance of a given distribution to the set of distributions stochastically dominating the benchmark distribution. We obtain a new formulation of the problem and establish  a lower bound for the transportation distance to the set of distributions (corresponding to random variables) dominating the benchmark in a given order. Focusing on the second order stochastic dominance relation, we express the lower bound in two equivalent ways: one formula uses the shortfall functions and another one uses the Lorenz functions. In section \ref{s:projection}, making a simplifying assumption, we provide a construction for the random variable that plays the role of a projection onto the set of variables with distributions dominating the benchmark. We establish that the lower bound is, in fact, equal to the mass transportation distance for the second order stochastic dominance relation. In section \ref{s:generalization}, we generalize the formulae for the transportation distance to the cases that are ruled out by the simplifying assumption of section \ref{s:projection}. In section \ref{s:method}, we present a numerical method for solving the problem and analyze its convergence. Finally, in section \ref{s:numerical}, we report the results of our numerical experience, which illustrate the efficiency of the proposed relaxation approach. 

\section{Stochastic Dominance}
\label{s:sd}

Given a family $\mathcal{U}$ of functions $u: \mathbb{R} \to \mathbb{R}$, we say that a random variable $X$ dominates another random variable $Y$, if 
\[
  \mathbb{E}[u(X)] \geq \mathbb{E}[u(Y)]\quad \text{for all } u \in \mathcal{U}. 
  \]
The set of functions $\mathcal{U}$ is called the generator of the order and we denote the relation as $X \succeq_{\mathcal{U}} Y$. Commonly used orders are the stochastic dominance of first and second order. Stochastic dominance of first order is generated by all non-decreasing functions $u: \mathbb{R} \to \mathbb{R}$. This relation can also be characterized  by using the distribution functions of the random variables involved, as well as their inverses (see, e.g., \cite[Chapter 5]{DDARriskbook}). More precisely, for a random variable $Z$, let $F_Z(\cdot)$ be its right-continuous distributions function: $F_Z(\eta) = P(Z\leq \eta)$  for $\eta\in\Rb$. The quantile function $F^{(-1)}_Z$ of $Z$ is defined by setting for any $p\in (0,1)$, $F^{(-1)}_Z (p) = \inf\{\eta: F_Z(\eta)\geq p\}.$ 
\begin{definition}
A random variable $X$ dominates a random variable $Y$ with respect to the first order stochastic dominance,
denoted $X \succeq_{(1)} Y$, if and only if
 $F_X(\eta) \leq F_Y(\eta)$ for all $\eta \in \mathbb{R}$ or, equivalently, 
 $F^{(-1)}_X(p) \geq F^{(-1)}_Y(p)$ for all $p \in (0,1).$ 
 \end{definition}
Most popular relation is the second-order stochastic dominance due to its consistency with risk-averse preferences. This relation is  generated by all non-decreasing concave functions $u: \mathbb{R} \to \mathbb{R}$. For an integrable random variable $Z$, we define
\begin{equation}
\label{second}
 F^{(2)}_Z(\eta) = \int_{-\infty}^{\eta} F_Z(t) \ dt
 \quad \mbox{for} \  \eta \in \Rb.
\end{equation}
Changing the order of integration in \eqref{second}, we obtain the following representation
\begin{equation}
\label{shortfall}
F^{(2)}_Z(\eta)
=   \int_{-\infty}^\eta (\eta - t) \ P_Z(d t) =
 \Eb  [ \max ( \eta - Z, 0 )],
\end{equation}
with $P_Z = P\circ Z^{-1}$ being the induced probability measure on $\Rb$.
The integrated quantile function, $F^{(-2)}_{Z} : \Rb \rightarrow {\Rb }$,  of a random variable $Z \in \Lc_1(\Omega,\Fc,P)$ 
is given by
\begin{equation}
\label{F-2}
  F^{(-2)}_Z(p) =\int_0^p F^{(-1)}_Z(t)\; dt  \quad \mbox{for } \;  0 < p \le 1.
\end{equation}
We also define $F^{(-2)}_Z(0) = 0$ and $F^{(-2)}_Z(p) = +\infty$ for all  real numbers $p\not\in[0,1]$.
The function $F^{(-2)}_Z(\cdot)$  is known as the absolute Lorenz function, introduced in \cite{lorenz1905methods}.
\begin{definition}
A random variable $X\in \Lc_1(\Omega,\Fc,P)$ \emph{dominates in the second order} another
random variable $Y\in \Lc_1(\Omega,\Fc,P)$, denoted $X \succeq_{(2)} Y$, if one of the following equivalent requirements is satisfied  (cf. \cite[Chapter 5]{DDARriskbook}): 
\begin{itemize}
    \item[{\rm(F)}] $F^{(2)}_X(\eta) = \mathbb{E}[(\eta-X)_{+}]\leq \mathbb{E}[(\eta-Y)_{+}] = F^{(2)}_Y(\eta)$, for all $\eta \in \mathbb{R}$; 
    \item[{\rm(L)}] $F^{(-2)}_X(p) \geq F^{(-2)}_Y(p)$, for all $p \in [0,1]$.
\end{itemize}
\end{definition}
Motivated by the representation \eqref{shortfall}, we define the following function for a real number $r\geq 1$ and any random variable
$Z\in \Lc_{r-1}(\Omega,\Fc,P)$, :  
\begin{equation}
\label{e:Fp}
F^{(r)}_X(\eta)=\frac{1}{\Gamma(r)}\int_{-\infty}^{\eta}(\eta-t)^{r-1}\ P_{X}(d t)\\
= \frac{1}{\Gamma(r)}\Eb\big[\max(0,\eta-X)^{r-1}\big]
\end{equation}
with the convention that $\max(0,\eta-x)^0 = \one_{(-\infty,\eta]}(x)$, $F^{(1)}_Z(\cdot)= F_Z(\cdot),$ and with
\[
\Gamma(r) = \int_0^\infty z^{r-1} e^{-z}\  d z.
\]
\begin{definition}
\label{d:sdp}
A random variable $X\in \Lc_{r-1}(\Omega,\Fc,P)$ with $r\in [1,\infty)$ \emph{dominates in the $r$-th order} another
random variable $Y\in \Lc_{r-1}(\Omega,\Fc,P)$ if 
\begin{equation}
\label{d:psd}
F^{(r)}_X(\eta) \le F^{(r)}_Y(\eta)\quad \text{for all } \eta\in \Rb.
\end{equation}
\end{definition}
This unifying notion allowing also fractional orders $r$ was introduced in \cite{Fishburn1976ContinuaOS}.
The functions $F^{(r)}_Z(\cdot)$ for $r\geq 2$ are evidently convex as we shall argue in due course. 

The stochastic dominance of order $r\geq 2$ reflects preference to large values. If the decision maker has a preference to small values, than a popular stochastic order is the increasing convex order, denoted $\preceq_{\rm ico}$.   For two random variables $X,Y \in \Lc_1(\Omega,\Fc,P)$, it is said that $X$ is stochastically smaller than $Y$ with respect to the increasing convex order if 
\[
  \bar{F}_X^{(2)}(\eta) = \mathbb{E}[(X-\eta)_{+}]\leq \mathbb{E}[(Y-\eta)_{+}]=\bar{F}_Y^{(2)}(\eta)\quad  \text{for all } \eta \in \mathbb{R}.
\]
This order is closely related to the stochastic dominance of second order in the following way:
\begin{equation}
\label{e:ico}
  X\preceq_{\rm ico} Y \quad\Longleftrightarrow\quad -X\succeq_{(2)} -Y. 
\end{equation} 
For more general definition and further information about stochastic comparisons when small values are preferred, see \cite[Chapter 5]{DDARriskbook}.

Let a benchmark random variable $Y\in\Lc_{r-1}(\Omega,\Fc,P)$, $r\geq 1,$ be given. We consider the sets
\[
A_r (Y) =\big\{ X\in \Lc_{r-1}(\Omega,\Fc,P):\, X\succeq_{(r)}  Y\big\}.
\]
For any real numbers $q>r\geq 1$, the following inclusion holds (\cite[Theorem 5.3]{DDARriskbook}):
\begin{equation}
\label{e:Ar-inclusions}
A_{r}(Y) \cap \Lc_{q-1}(\Omega,\Fc,P)  \subset A_q(Y).
\end{equation}
Our goal is to propose a mathematically sound and numerically efficient way of solving the relaxation of problem \eqref{dc-pgeneral} with the main focus on the second-order stochastic dominance.  We use  
a suitable mass transportation distance $\di(\cdot,\cdot)$ between two random variables from $\Lc_{r-1}(\Omega,\Fc,P)$ and a weight-parameter $\alpha>0$ for the distance of $G(x)$ to the dominating set $A_r(Y)$. 

We focus on the analysis and the numerical solution of the relaxation problem:  
\begin{equation}
 \label{p:relaxedproblem}
\min_{x,Z} \,  f(x) + \alpha \di(G(x),Z) \quad 
\text{s.t. }\, Z \succeq_{(r)} Y, \quad x \in \mathcal{X}.
\end{equation}
The objective function might have the form $f(x) = -\Eb[G(x)]$ or is might represent a coherent measure of risk for $-G(x)$.  
If $r=2$, then the representation (L) in the definition of the relation $Z \succeq_{(2)} Y$ is equivalent to a continuum of Average Value-at Risk inequalities for all levels of risk. Imposing a stochastic order constraint is a known way of shaping the risk profile of the random outcome $G(x)$. On the other hand, if we only maximize the expected value of $G(x)$, we obtain a larger value at the expense of higher risk. Hence, problem \eqref{p:relaxedproblem} provides a way to balance risk and reward also in the case when the benchmark is achievable.  

\section{Lower bound of the transportation distance}
\label{s:lower_bound}

The main problem we analyze in this section is the problem of calculating the transportation distance for a given decision $x\in\Xc$.  
We shall use the Monge-Kantorovich transportation distance $W_r(\cdot,\cdot)$ (also called Wasserstein distance). 
Let the space $\Pc_r(\Rb)$ contain the probability measures on the real line which have a finite moment of order $r>0$ while $\Pc_0(\Rb)$ stands for the space  of probability measures on the real line equipped with the weak convergence. 
We equip the space $\Pc_r(\Rb)$ with the distance $W_\ell(\cdot,\cdot)$, where $1\leq\ell\leq r.$ For scalar random variables, $W_\ell(\cdot,\cdot)$ has the following representation (cf. \cite{dallaglio}):
\[
W_\ell(X,Z) = \Big(\int\limits_0^1 \big| F^{-1}_Z(p) - F^{-1}_X(p)\big|^\ell\; dp\Big)^\frac{1}{\ell}. 
\]
We define $\dist_\ell\big(X,A_{r}(Y)\Big)$ with $1\leq\ell\leq r+1$ by setting
\begin{equation}
\label{d:distancetoA}
\dist_\ell\big(X,A_r(Y)\big) = \inf\big\{ W_\ell(X,Z):\; Z\in A_r(Y)\big\}.
\end{equation}
For $\ell=1$, we shall omit the subscript and shall use simply $W(\cdot,\cdot)$ and $\dist(\cdot,\cdot)$. 

While the mass transportation distances are defined on the space of probability measures, in our optimization problems we deal with random outcomes. We identify those random variables with elements $Z\in \Lc_r(\Omega,\Fc,P)$ and assume throughout the paper that the probability space $(\Omega,\Fc,P)$ is atomless. In that case, a random variable $U$ exists, which is defined on $(\Omega,\Fc,P)$ and has the uniform distribution on $[0,1]$.  
 Every probability distribution $\mu\in \Pc_r(\Rb)$ is determined by the distribution function: $F_{\mu}(\eta)  = \mu\big((-\infty,\eta]\big)$, which  is
non-decreasing and right-continuous. Therefore, the inverse of $F_{\mu}(\cdot)$ may be defined as follows:
\begin{equation}
\label{quantile-function}
F^{(-1)}_\mu(p) = \inf\,\{ \eta\in \Rb :\; F_{\mu}(\eta) \ge p\},\quad p\in (0,1).
\end{equation}
The definition implies that $F^{-1}_\mu(p)$ is the smallest $p$-quantile of $\mu$. We call $F^{-1}_\mu(\cdot)$ the quantile function associated with the probability measure $\mu$.
Every quantile function is nondecreasing and left-continuous on the open interval $(0,1)$. On the other hand, every nondecreasing and left-continuous
function $Q(\cdot)$ on $(0,1)$ uniquely defines a distribution function:
$F_\mu(\eta) = \sup\, \{ p\in (0 ,1) : Q(p) \le \eta \},
$
which corresponds to a certain probability measure $\mu \in \Pc(\Rb)$.
Furthermore, for any random variable $Z\in \Lc_{r}(\Omega,\Fc,P)$, the variable 
$\tilde{Z} = F_Z^{(-1)}(U)$ has the same distribution as $Z$.  

Consider the sets $\Ac_{r+1}(P_Y)$ of all probability measures $\mu\in \Pc_r(\Rb)$ dominating the measure $P_Y$ induced by the random variable $Y\in \Lc_r(\Omega,\Fc,P)$. More precisely, for $i$ such that $1\leq i\leq r+1$, we define
\[
\Ac_i (P_Y) =\big\{ \mu\in \Pc_{i-1}(\Rb):\, F^{(i)}_\mu(\eta) \leq  F^{(i)}_Y(\eta)\big\}.
\]
\begin{theorem}
\label{t:equivalence}
The mapping $P_Y\to \Ac_r(P_Y)\subset \Pc_{r-1}(\Rb) $ has a closed graph for any $r\geq 1$ and it has convex images for $r\geq 2.$ 
\end{theorem}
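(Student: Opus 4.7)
The plan is to handle the two assertions separately, using in both cases the integral representation \eqref{e:Fp} of $F^{(r)}_\mu(\eta)$ as the expectation of $\max(0,\eta-\cdot)^{r-1}/\Gamma(r)$ against $\mu$. The closed graph will follow from continuity (or suitable semicontinuity) of $\mu\mapsto F^{(r)}_\mu(\eta)$ for each $\eta$, while convexity is essentially immediate from linearity of the same functional.

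For the closed graph, I would take sequences $P_{Y_n}\to P_Y$ in $\Pc_{r-1}(\Rb)$ and $\mu_n\to\mu$ in $\Pc_{r-1}(\Rb)$ with $\mu_n\in\Ac_r(P_{Y_n})$, and show $\mu\in\Ac_r(P_Y)$, i.e., $F^{(r)}_\mu(\eta)\le F^{(r)}_Y(\eta)$ for every $\eta\in\Rb$. When $r\ge 2$, the integrand $g_\eta(t)=\max(0,\eta-t)^{r-1}$ is continuous with $|g_\eta(t)|\le C_\eta(1+|t|^{r-1})$, so the Wasserstein-type convergence on $\Pc_{r-1}(\Rb)$ (weak convergence together with convergence of the $(r-1)$-th absolute moment) yields $F^{(r)}_{\mu_n}(\eta)\to F^{(r)}_\mu(\eta)$ and $F^{(r)}_{Y_n}(\eta)\to F^{(r)}_Y(\eta)$ for each $\eta$, so the pointwise inequality passes to the limit. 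When $r=1$, $\Pc_0(\Rb)$ carries the topology of weak convergence and $F_\mu$ is merely upper semicontinuous in $\mu$, but $F_{\mu_n}(\eta)\to F_\mu(\eta)$ and $F_{Y_n}(\eta)\to F_Y(\eta)$ at common continuity points of $F_\mu$ and $F_Y$, which form a cocountable, hence dense, subset of $\Rb$. Right-continuity of the two distribution functions extends the resulting inequality from this dense set to all of $\Rb$ by taking a sequence of common continuity points decreasing to an arbitrary $\eta_0$.

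For convexity when $r\ge 2$, I would exploit the fact that, by \eqref{e:Fp}, $\mu\mapsto F^{(r)}_\mu(\eta)$ is an affine functional of $\mu$ for each fixed $\eta$, and that $\Pc_{r-1}(\Rb)$ is closed under convex combinations. Given $\mu_1,\mu_2\in\Ac_r(P_Y)$ and $\lambda\in[0,1]$,
\[F^{(r)}_{\lambda\mu_1+(1-\lambda)\mu_2}(\eta)=\lambda F^{(r)}_{\mu_1}(\eta)+(1-\lambda)F^{(r)}_{\mu_2}(\eta)\le F^{(r)}_Y(\eta),\]
hence $\lambda\mu_1+(1-\lambda)\mu_2\in\Ac_r(P_Y)$.

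The only delicate point is the choice of topology implicit in ``closed graph'': for $r\ge 2$ the Wasserstein-$(r-1)$ metric on $\Pc_{r-1}(\Rb)$ is strong enough to make $F^{(r)}_\cdot(\eta)$ jointly continuous, so the argument is routine; for $r=1$ one must rely on the density-of-continuity-points and right-continuity dodge, since $F_\cdot(\eta)$ is not weakly continuous. This is the main (albeit minor) obstacle I anticipate.
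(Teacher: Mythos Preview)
Your argument is correct and takes a genuinely different route from the paper's. The paper proves both assertions by lifting measures to random variables via the quantile transform $Z_\mu = F^{(-1)}_\mu(U)$: for convexity it invokes the convexity of the random-variable set $A_r(Y)$ (which holds only for $r \geq 2$, whence the restriction in the statement), and for the closed graph it converts Wasserstein convergence of the measures into $\Lc_{r-1}$-norm convergence of the quantile transforms and then cites closure lemmas for $A_r(Y)$ from \cite{DDARriskbook}. By contrast, you work directly at the level of measures, exploiting the affineness of $\mu \mapsto F^{(r)}_\mu(\eta)$ for convexity and the continuity (or, for $r=1$, convergence at continuity points) of this functional under the relevant topology for the closed graph. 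Your approach is more self-contained and, for convexity, actually yields the conclusion for all $r \geq 1$, not only $r\geq 2$. One minor fix: your case split ``$r \geq 2$'' versus ``$r = 1$'' omits $r \in (1, 2)$; the argument you give for $r \geq 2$ (continuity of $g_\eta$ plus the growth bound $|g_\eta(t)|\le C_\eta(1+|t|^{r-1})$) goes through verbatim for all $r > 1$, so simply extend that case accordingly.
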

\begin{proof}
To show the convexity of the sets $\Ac_r(P_Y)$, we consider a convex combination of any two measures 
$\mu$ and $\nu$ in $\Ac_r(P_Y)$: $\lambda= \alpha\mu+ (1-\alpha)\nu$. Using an uniform random variable $U$ defined on 
$(\Omega,\Fc,P),$ with values in $[0,1]$, we define random variables $Z_\lambda = F^{(-1)}_\lambda(U)$, $Z_\mu = F^{(-1)}_\mu(U)$, and $Z_\nu = F^{(-1)}_\nu(U)$. The distribution functions of $Z_\lambda $, $Z_\mu$, and $Z_\nu$ are 
$\lambda,\mu$ and $\nu$, respectively. Hence, $Z_\mu$, and $Z_\nu$ are elements of $A_r(Y)$, which is a convex set for $r\geq 2$, entailing that $Z_\lambda\in A_r(Y)$. This implies, that $\lambda\in\Ac_r(P_Y).$

Now, let $(P_{Y_n},\mu_n)$, $n\in\Nb$  be such that $\mu_n\in \Ac_r(P_{Y_n})$ and the sequences $P_{Y_n}$ and $\mu_n$ are Cauchy sequence of measures in $\Pc_{r-1}(\Rb)$ converging to $\nu$ and $\bar{\mu}$, respectively. In the same way, we define random variables
$Z_n = F^{(-1)}_{\mu_n}(U)$ and obtain that $Z_n\in A_r(Y_n).$ Let $\bar{Z} = F^{(-1)}_{\bar{\mu}}(U)$ and $\bar{Y} = F^{(-1)}_{\nu}(U)$. For $r>1$, we have 
\[
W_{r-1}(Z_n,\bar{Z}) = \Big(\int_0^1 \big| F^{(-1)}_{\mu_n}(p) - F^{(-1)}_{\bar{\mu}}(p)\big|^{r-1}\; dp \Big)^\frac{1}{r-1}\xrightarrow{n\to \infty} 0. 
\]
This implies that 
\[
\|Z_n-\bar{Z}\|_{r-1}= \Big(\int_\Omega \big| F^{(-1)}_{\mu_n}(U(\omega)) - F^{(-1)}_{\bar{Z}}(U(\omega))\big|^{r-1}\; P(d\omega) \Big)^\frac{1}{r-1} \xrightarrow{n\to \infty} 0.
\]
Hence, $Z_n$ converges to $\bar{Z}$ in $\Lc_{r-1}$-norm. Similarly, $Y_n$ converges to $\bar{Y}$ in $\Lc_{r-1}$-norm. Then Lemma 5.27 in \cite{DDARriskbook} implies that $\bar{Z}\in A_r(\bar{Y})$, which entails that $\bar{\mu}\in \Ac_r(P_Y).$ More precisely, this follows from the fact that for any $\eta\in \Rb$, the operator $Z \mapsto F^{(r)}_Z(\eta)$ is continuous in $\Lc_{r-1}(\Omega,\Fc,P)$, $r>1$ due to the continuity of the mappings $z\to (\eta -z)_+^{r-1}$ and $Z\to \Eb [Z]$.

For $r=1$, we apply  Lemma 5.26 in \cite{DDARriskbook} to arrive at the desired conclusion. 
\end{proof}

For any fixed $Y\in \Lc_r(\Omega,\Fc,P)$, the quantity $\dist_\ell(X,A_{r+1}(Y))$ with $1\leq\ell\leq r$ is well defined for any $X\in \Lc_r(\Omega,\Fc,P)$ since the set $A_{r+1}(Y)$ is non-empty and 
\[
0\leq \inf\big\{ W_\ell(X,Z):\; Z\in A_{r+1}(Y)\big\}\leq W_\ell(X,Y).
\] 
Theorem~\ref{t:equivalence} also shows that we can use the distance between distributions and between random variables interchangeably, which is important in the context of stochastic optimization problems. 

Now, we formulate the problem of identifying the weakest relaxation of the stochastic order constraints in problem \eqref{dc-pgeneral} when stochastic dominance of order $r$ is used.
We denote the random outcome of our decision by $X=G(x)\in\Lc_r(\Omega,\Fc,P)$ and consider $1\leq\ell\leq r$. We deal with the following projection problem:
\begin{equation}
\label{p:projection}
\inf_Z \;  W_\ell(X,Z) \quad
\text{ s.t. }\;  Z\in A_{r+1}(Y).
\end{equation}
We define the set $A_1^{(1)}(Y)= A_1(Y)\cap \Lc_1(\Omega,\Fc,P)$. The mapping $P_Y\to \Ac_r(P_Y)\subset \Pc_1(\Rb) $ has a closed graph and the set $A_1^{(1)}(Y)$ is closed with respect to the $\Lc_1$-norm. 
More generally, for $Y\in \Lc_r(\Omega,\Fc,P)$ and any numbers $1\leq i <\ell \leq r$, we introduce the restrictions
\[
A_i^{(\ell)}(Y)= A_i(Y)\cap \Lc_\ell(\Omega,\Fc,P)\quad\text{and}\quad \Ac_i^{(\ell)} (P_Y) =\Ac_i(P_Y)\cap \Pc_{\ell}(\Rb).
\]
The set $A_i^{(\ell)}(Y)$ is closed with respect to the $\Lc_\ell$-norm and the set $\Ac_i^{(\ell)}(P_Y)$ is closed with respect to the distance
$W_\ell(\cdot,\cdot)$ due to the properties of the $W_\ell$ distance. 
\begin{lemma}
\label{l:smallerdist}
Given random variables $X,Y\in \Lc_1(\Omega,\Fc,P)$, for every random variable $Z\in A_r^{(1)}(Y)$ with $r\in [1,2)$, an associated variable $\bar{Z}\in A_r^{(1)}(Y)$ exists such that 
$F^{(-1)}_{\bar{Z}}(p) \geq  F^{(-1)}_{X}(p)$ for all $p \in [0,1]$ and $W_1(X,\bar{Z})\leq W_1(X,Z)$. 

If $X,Y\in \Lc_{r-1}(\Omega,\Fc,P)$ with  $r\geq 2$, then  
for every random variable $Z\in A_r(Y)$ an associated variable $\bar{Z}\in A_r(Y)$ exists such that 
$F^{(-1)}_{\bar{Z}}(p) \geq  F^{(-1)}_{X}(p)$ for all $p \in [0,1]$ and $W_\ell(X,\bar{Z})\leq W_\ell(X,Z)$ for any $1\leq \ell\leq r-1$.
\end{lemma}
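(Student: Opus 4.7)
The plan is to construct $\bar{Z}$ as the random variable obtained by taking the pointwise maximum of the quantile functions of $X$ and $Z$. Concretely, define
\[
q_{\bar{Z}}(p) := \max\bigl(F^{(-1)}_X(p),\, F^{(-1)}_Z(p)\bigr), \qquad p\in (0,1).
\]
The function $q_{\bar{Z}}$ is nondecreasing and left-continuous on $(0,1)$, as it is the pointwise maximum of two such functions, so it is the quantile function of some probability measure on $\Rb$. Using the atomlessness of $(\Omega,\Fc,P)$, pick a uniform random variable $U$ on $[0,1]$ and set $\bar{Z}=q_{\bar{Z}}(U)$. The required inequality $F^{(-1)}_{\bar{Z}}(p)\geq F^{(-1)}_X(p)$ is then built into the construction.

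Next I would verify integrability and membership in $A_r(Y)$. The bound $|q_{\bar{Z}}|\leq |q_X|+|q_Z|$ together with the identity $\Eb[|W|^{s}]=\int_0^1 |q_W(p)|^{s}\,dp$ shows that $\bar{Z}$ lies in the same $\Lc_s$-space as $X$ and $Z$, namely $\Lc_1$ in the first case and $\Lc_{r-1}$ in the second. Since $q_{\bar{Z}}\geq q_Z$, the quantile characterization of first-order dominance gives $\bar{Z}\succeq_{(1)} Z$. In the case $r=1$ this combines with $Z\succeq_{(1)}Y$ by transitivity of the pointwise inequality $F^{(1)}_{\bar{Z}}\leq F^{(1)}_Z\leq F^{(1)}_Y$. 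For $r>1$ the just-verified integrability lets me invoke the inclusion \eqref{e:Ar-inclusions} to upgrade $\bar{Z}\succeq_{(1)}Z$ to $\bar{Z}\succeq_{(r)}Z$; transitivity of $F^{(r)}_{\bar{Z}}\leq F^{(r)}_Z \leq F^{(r)}_Y$ then yields $\bar{Z}\in A_r(Y)$ (respectively $A_r^{(1)}(Y)$).

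Finally, for the distance inequality I would argue pointwise on $(0,1)$. If $q_Z(p)\geq q_X(p)$, then $q_{\bar{Z}}(p)=q_Z(p)$ and the two quantities $|q_X(p)-q_{\bar{Z}}(p)|$ and $|q_X(p)-q_Z(p)|$ are equal. If $q_Z(p)<q_X(p)$, then $q_{\bar{Z}}(p)=q_X(p)$ and $|q_X(p)-q_{\bar{Z}}(p)|=0\leq |q_X(p)-q_Z(p)|$. In either case $|q_X(p)-q_{\bar{Z}}(p)|\leq |q_X(p)-q_Z(p)|$, so integrating the $\ell$-th power and taking $\ell$-th roots delivers $W_\ell(X,\bar{Z})\leq W_\ell(X,Z)$ for every $\ell\geq 1$ for which the right-hand side is defined.

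The only genuine subtlety I anticipate is the bookkeeping of integrability in the regime $r\in[1,2)$, where $\Lc_{r-1}$ is not a Banach space and one has to check that the nesting inclusion \eqref{e:Ar-inclusions} is applicable; the rest is essentially a direct consequence of the quantile characterizations of $\succeq_{(1)}$ and of the Wasserstein distance.
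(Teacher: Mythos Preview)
Your proposal is correct and follows essentially the same approach as the paper: both define $\bar{Z}$ via the pointwise maximum of the quantile functions of $X$ and $Z$, verify $\bar{Z}\succeq_{(1)}Z$, invoke the inclusion \eqref{e:Ar-inclusions} to upgrade to $\bar{Z}\succeq_{(r)}Z$, and compare Wasserstein distances through the quantile representation. Your pointwise case split for the distance inequality is in fact slightly cleaner than the paper's chain of equalities, and your explicit integrability check via $|q_{\bar{Z}}|\le|q_X|+|q_Z|$ fills in what the paper leaves as ``by construction.''
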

\begin{proof}
Consider an arbitrary $Z\in A_r(Y)$. If $F^{(-1)}_{Z}(p) \geq  F^{(-1)}_{X}(p)$, then there is nothing to prove because we can set $\bar{Z}=Z$.  Assume that $p\in(0,1)$ exists such that $F^{(-1)}_{Z}(p) <  F^{(-1)}_{X}(p)$. 
Due to the monotonicity and left-continuity of the quantile function, an interval $(\alpha,\beta)\subset (0,1)$ of positive length exists such that $F^{(-1)}_{X}(p) -  F^{(-1)}_{Z}(p)> 0$ for all $p\in(\alpha,\beta)$.
Then 
\[
\int^1_0 \max\big(0,  F^{(-1)}_{X}(p) - F^{(-1)}_{Z}(p) \big) \, d p > 0.
\]
We define $Q(p) = \max\big(F^{(-1)}_{Z}(p), F^{(-1)}_{X}(p)\big)$ and notice that it is a non-decreasing and left-continuous function. Let the random variable $\bar{Z}$ be defined as $\bar{Z}= Q(U)$, where $U$ is a random variable with the uniform distribution on $[0,1]$. We obtain
\begin{align*}
W_1(\bar{Z}, X) & = \int^1_0 |Q(p) - F^{(-1)}_{X}(p)| \, d p = \int^1_0 \big(Q(p) - F^{(-1)}_{X}(p)\big) \, d p \\
& = \int^1_0 \max\big(F^{(-1)}_{Z}(p), F^{(-1)}_{X}(p)\big) - F^{(-1)}_{X}(p) \, d p \\
& = \int^1_0 \max\big(F^{(-1)}_{Z}(p) - F^{(-1)}_{X}(p), 0\big) \, d p \\
& < \int^1_0 \big|F^{(-1)}_{Z}(p) - F^{(-1)}_{X}(p)\big| \, d p = W_1(Z,X).
\end{align*}
In the same way, it follows that for $r-1\geq\ell>1$ the following relations hold:
\begin{multline*}
[W_\ell(\bar{Z}, X)]^\ell  = \int^1_0 |Q(p) - F^{(-1)}_{X}(p)| ^\ell\, d p \\ < \int^1_0 \Big(\max\big\{F^{(-1)}_{Z}(p) - F^{(-1)}_{X}(p),  F^{(-1)}_{X}(p) - F^{(-1)}_{Z}(p)\big\}\Big)^\ell \, d p \\
 = \int^1_0 |F^{(-1)}_{Z}(p) - F^{(-1)}_{X}(p)|^\ell \, d p = [W_\ell (Z,X) ]^\ell.
\end{multline*}
Additionally, $\bar{Z}\succeq_{(1)} Z$ and $\bar{Z}\in\Lc_{r-1}(\Omega,\Fc,P)$ by construction. We conclude that $\bar{Z}\succeq_{(r)} Z\succeq_{(r)} Y$ for any $r \geq  1$ due to \eqref{e:Ar-inclusions}. 
Thus, $\bar{Z} \in A_r(Y)$ as stated. 
\end{proof}

\begin{corollary}
\label{c:equivalent_problem}
Problem \eqref{p:projection} is equivalent to the following problem:
\begin{equation}
\label{p:projection2}	
\min_{Z} \,  W_\ell(X,Z) \quad 
\text{s.t. }  Z \succeq_{(r)} Y,\quad  Z \succeq_{(1)} X.
\end{equation}
with $1\leq \ell\leq r-1$.
Additionally, for $\ell=1$, the objective function of problem \eqref{p:projection2} takes on the form
\[
W_1(Z,X)=\mathbb{E}[Z] - \mathbb{E}[X].  
\]
\end{corollary}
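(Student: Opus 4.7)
The plan is to deduce the corollary directly from Lemma~\ref{l:smallerdist}. Let $v_1$ and $v_2$ denote the optimal values of problems \eqref{p:projection} and \eqref{p:projection2}, respectively. Since any $Z$ that is feasible for \eqref{p:projection2} is automatically feasible for \eqref{p:projection} (its feasible set is the feasible set of \eqref{p:projection} intersected with the extra constraint $Z\succeq_{(1)}X$), the inequality $v_1\leq v_2$ is immediate.

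For the reverse inequality, the plan is to pick an arbitrary $Z$ feasible for \eqref{p:projection} and invoke Lemma~\ref{l:smallerdist} (applied with the index shifted by one so that $A_{r+1}(Y)$ in \eqref{p:projection} plays the role of the set $A_r(Y)$ of the lemma). The lemma produces a variable $\bar Z$ still belonging to $A_{r+1}(Y)$ and satisfying $F^{(-1)}_{\bar Z}(p)\geq F^{(-1)}_X(p)$ for every $p\in[0,1]$ together with $W_\ell(X,\bar Z)\leq W_\ell(X,Z)$. The pointwise inequality between the quantile functions is exactly the condition $\bar Z\succeq_{(1)}X$, so $\bar Z$ is feasible for \eqref{p:projection2}. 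Consequently $v_2\leq W_\ell(X,\bar Z)\leq W_\ell(X,Z)$, and taking the infimum over $Z$ yields $v_2\leq v_1$.

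For the final assertion, the plan is to exploit the extra constraint $Z\succeq_{(1)}X$, which provides $F^{(-1)}_Z(p)\geq F^{(-1)}_X(p)$ for all $p\in(0,1)$ and therefore removes the absolute value in the integral representation $W_1(X,Z)=\int_0^1\bigl|F^{(-1)}_Z(p)-F^{(-1)}_X(p)\bigr|\,dp$. Combining this with the standard identity $\Eb[Z]=\int_0^1 F^{(-1)}_Z(p)\,dp$ (valid for integrable $Z$, and analogously for $X$) gives $W_1(X,Z)=\Eb[Z]-\Eb[X]$. No real obstacle arises; the substantive content has already been absorbed into Lemma~\ref{l:smallerdist}, and the only minor care needed is to align the indices of the lemma with those used in the statements of \eqref{p:projection} and \eqref{p:projection2}.
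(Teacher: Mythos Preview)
Your proof is correct and follows essentially the same approach as the paper: the inclusion of feasible sets gives $v_1\le v_2$, Lemma~\ref{l:smallerdist} gives $v_2\le v_1$, and the first-order dominance constraint removes the absolute value in the $W_1$ integral. You are slightly more explicit than the paper in tracking the index shift between \eqref{p:projection} and \eqref{p:projection2} and in invoking the identity $\Eb[Z]=\int_0^1 F^{(-1)}_Z(p)\,dp$, but the underlying argument is identical.
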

\begin{proof}
The expression for the objective function follows directly from the definition of $W_1$ taking into account that $F^{(-1)}_{Z}(p) \geq F^{(-1)}_{X}(p)$.

The feasible set of problem \eqref{p:projection2} is included in the feasible set of \eqref{p:projection}. Hence, its optimal value is not smaller than the optimal value of \eqref{p:projection}.
This together with Lemma~\eqref{l:smallerdist} proves the statement.
\end{proof}

\begin{theorem}
\label{t:lower_bound}
Given random variables $X,Y\in\Lc_1(\Omega,\Fc,P)$, the following holds 
\[
 \dist(X,A_2(Y)) \geq \sup_{p\in [0,1]} \big(F^{(-2)}_Y(t) - F^{(-2)}_X(t)\big) = \sup_{\eta \in \Rb} \big(F^{(2)}_X(\eta) - F^{(2)}_Y(\eta)\big).
\]
\end{theorem}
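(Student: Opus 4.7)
The plan is to establish the inequality by lower-bounding $W_1(X,Z)$ for an arbitrary $Z\in A_2(Y)$ by the Lorenz-gap $F^{(-2)}_Y(p)-F^{(-2)}_X(p)$, then to argue that the two suprema coincide by Fenchel conjugacy. The first leg exploits Corollary~\ref{c:equivalent_problem}; the second is a clean duality computation between the shortfall and Lorenz functions.

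First I would use Corollary~\ref{c:equivalent_problem} to restrict attention, without loss, to those $Z\in A_2(Y)$ with $Z\succeq_{(1)} X$. For such a $Z$, the quantile difference $F^{(-1)}_Z(t)-F^{(-1)}_X(t)$ is nonnegative on $(0,1)$, hence the partial integral
\[
F^{(-2)}_Z(p)-F^{(-2)}_X(p)=\int_0^p\bigl(F^{(-1)}_Z(t)-F^{(-1)}_X(t)\bigr)\,dt
\]
is nondecreasing in $p$ and attains its maximum at $p=1$, where it equals $\Eb[Z]-\Eb[X]=W_1(X,Z)$ by Corollary~\ref{c:equivalent_problem}. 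Combining this with $F^{(-2)}_Z(p)\geq F^{(-2)}_Y(p)$, which is characterization (L) of $Z\succeq_{(2)} Y$, yields
\[
F^{(-2)}_Y(p)-F^{(-2)}_X(p)\;\leq\; F^{(-2)}_Z(p)-F^{(-2)}_X(p)\;\leq\; W_1(X,Z)
\]
for every $p\in[0,1]$. Taking the supremum over $p$ on the left and the infimum over admissible $Z$ on the right delivers the first form of the lower bound:
\[
\sup_{p\in[0,1]}\bigl(F^{(-2)}_Y(p)-F^{(-2)}_X(p)\bigr)\;\leq\;\dist(X,A_2(Y)).
\]

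Next I would verify the equality of the two suprema via Fenchel conjugacy between $F^{(2)}_X$ and $F^{(-2)}_X$. For any $Z\in\Lc_1$ the identity
\[
\eta p-F^{(-2)}_Z(p)=\int_0^p\bigl(\eta-F^{(-1)}_Z(t)\bigr)\,dt
\]
is maximized over $p\in[0,1]$ at $p=F_Z(\eta)$, giving $\sup_{p\in[0,1]}\bigl(\eta p-F^{(-2)}_Z(p)\bigr)=\Eb[(\eta-Z)_+]=F^{(2)}_Z(\eta)$, and by biconjugation $F^{(-2)}_Z(p)=\sup_{\eta\in\Rb}\bigl(\eta p-F^{(2)}_Z(\eta)\bigr)$. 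Substituting the first identity for $X$ into $F^{(2)}_X(\eta)-F^{(2)}_Y(\eta)$ and swapping the order of the two suprema gives
\[
\sup_{\eta\in\Rb}\bigl(F^{(2)}_X(\eta)-F^{(2)}_Y(\eta)\bigr)=\sup_{p\in[0,1]}\Bigl(-F^{(-2)}_X(p)+\sup_{\eta\in\Rb}(\eta p-F^{(2)}_Y(\eta))\Bigr)=\sup_{p\in[0,1]}\bigl(F^{(-2)}_Y(p)-F^{(-2)}_X(p)\bigr),
\]
which completes the proof.

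The main obstacle is really just the first leg: one has to realize that Corollary~\ref{c:equivalent_problem} makes the quantile difference sign-definite, so the $W_1$-distance controls the Lorenz gap uniformly in $p$ without extra work. The conjugacy step is routine, but one must be careful that the suprema can be exchanged; this is justified because neither objective depends jointly on $(\eta,p)$ beyond the bilinear term $\eta p$, so Fubini-type interchange of suprema is automatic.
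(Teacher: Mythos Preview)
Your proof is correct and follows essentially the same approach as the paper: the first leg uses the reduction to $Z\succeq_{(1)} X$ (the paper invokes Lemma~\ref{l:smallerdist} directly together with an $\epsilon$-argument, you invoke its consequence Corollary~\ref{c:equivalent_problem}), and the second leg rests on the Fenchel conjugacy between $F^{(2)}_Z$ and $F^{(-2)}_Z$. Your single sup-swap in the duality step is a slightly cleaner packaging than the paper's pair of opposite inequalities obtained by evaluating at $\eta_Y(p)$ and $p_Y(\eta)$, but the content is identical.
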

\begin{proof}
Let $\epsilon >0$. The definition of $\dist(X, A_2(Y))$ implies that a random variable 
$Z_\epsilon\in A_2(Y)$ exists such that 
\[
\dist(X, A_2(Y)) +\epsilon \geq W (X,Z_\epsilon)
\]
Using Lemma~\ref{l:smallerdist}, we obtain
that a point $\bar{Z}_\epsilon\in A_2(Y)\cap A_1^{(1)}(X)$ exists such that for all $p\in (0,1]$:
\begin{align*}
W(X, Z_\epsilon)  & \geq W_1(X,\bar{Z}_\epsilon)= \int^1_0 \big(F^{(-1)}_{\bar{Z}_\epsilon}(t) - F^{(-1)}_X(t)\big)\; d t \\
& \geq \int^p_0 (F^{(-1)}_{\bar{Z}_\epsilon}(t) - F^{(-1)}_X(t))\; d t 
\; \geq \int^p_0 F^{(-1)}_{Y}(t)\; d t - \int^p_0 F^{(-1)}_X(t))\; d t. 
\end{align*}
The last inequality holds due to $\bar{Z}_\epsilon\ssd Y$. Hence, 
\[
W(X, Z_\epsilon) \geq \sup_{p\in (0,1]} \Big( \int^p_0 F^{(-1)}_{Y}(t)\; d t - \int^p_0 F^{(-1)}_X(t)\; d t\Big) = 
\sup_{p\in (0,1]} \big(F^{(-2)}_Y(t) - F^{(-2)}_X(t)\big). 
\]
Therefore, for any $\epsilon >0$, we obtain  
$\displaystyle{\dist(X, A_2(Y)) +\epsilon \geq \sup_{p\in (0,1]} \big(F^{(-2)}_Y(t) - F^{(-2)}_X(t)\big)}. $
Letting $\epsilon\downarrow 0$, we obtain the first lower bound.

Now, we shall show the equivalent formula for the lower bound of the distance.
For this purpose, we use the conjugate duality relation between the functions $F^{(2)}_Z(\cdot)$ and $F^{(-2)}_Z(\cdot)$, which holds for any integrable random variable $Z$ (\cite[Theorem 2.15]{DDARriskbook}).
For any $p\in(0,1]$, we denote a $p-$quantile of $Y$ by $\eta_Y(p)$. Due to the conjugate duality, the following relations hold
\begin{align*}
F^{(-2)}_Y(p) - F^{(-2)}_X(p) &= p\eta_Y(p) - F^{(2)}_Y(\eta_Y(p)) - 
\sup_{\eta\in\Rb} \big( p\eta- F^{(2)}_X(\eta)\big) \\
& \leq p\eta_Y(p) - F^{(2)}_Y(\eta_Y(p)) - p\eta_Y(p) + F^{(2)}_X(\eta_Y(p))\\
& \leq \sup_{\eta \in \Rb} \big(F^{(2)}_X(\eta) - F^{(2)}_Y(\eta)\big)
\end{align*}
Taking supremum with respect to $p\in [0,1]$, we obtain
\begin{equation}
\label{e:ineq1}
\sup_{p \in [0,1])} \big(F^{(-2)}_Y(p) - F^{(-2)}_X(p)\big)\leq \sup_{\eta \in \Rb} \big(F^{(2)}_X(\eta) - F^{(2)}_Y(\eta)\big).
\end{equation}
Now for any $\eta\in\Rb$ we define $p_Y(\eta)= P(Y\leq \eta)$. 
Using conjugate duality again, we obtain the following relations
\begin{align*}
F^{(2)}_X(\eta) - F^{(2)}_Y(\eta) & = \eta p_Y(\eta) - F^{(-2)}_Y(p_Y(\eta)) - 
\sup_{p\in[0,1])} \big(p\eta - F^{(-2)}_X(p)\big) \\
&\leq \eta p_Y(\eta) -  F^{(-2)}_Y(p_Y(\eta)) - 
\eta p_Y(\eta)+ F^{(-2)}_X(p_Y(\eta)) \\
& \leq \sup_{p\in[0,1])} \big(F^{(-2)}_Y(p) - F^{(-2)}_X(p)\big)
\end{align*}
 Taking supremum with respect to $\eta\in\Rb$ , we obtain
\begin{equation}
\label{e:ineq2}
\sup_{p \in [0,1])} \big(F^{(-2)}_Y(p) - F^{(-2)}_X(p)\big)\geq \sup_{\eta \in \Rb} \big(F^{(2)}_X(\eta) - F^{(2)}_Y(\eta)\big).
\end{equation} 
Relations \eqref{e:ineq1} and \eqref{e:ineq2} imply the result.
\end{proof}

\section{Projection}
\label{s:projection}

In this section, we focus on the set $A_2(Y)\subset\Lc_1(\Omega,\Fc,P)$ and consider a fixed random variable $X\in\Lc_1(\Omega,\Fc,P)$ such that $X\not\in A_2(Y)$. 
We construct the projection of the random variable $X$ onto $A_2(Y)$ under an additional assumption and we show that the distance $\dist(X,A_2(Y))$ is equal to the lower bound obtained in Theorem~\ref{t:lower_bound}.

We define the following two functions: 
\begin{align*}
\text{for all } p\in (0,1),\quad h(p) & = F^{(-1)}_Y(p) - F^{(-1)}_X(p),\\
\text{for all } p\in [0,1],\quad l(p) & = F^{(-2)}_Y(p) - F^{(-2)}_X(p).
\end{align*}
Additionally, we shall use the set $A$ and its complement $A^C$, defined as follows:
\[
A = \{ p\in(0,1] : \;\forall q < p, \; l(q) < l(p) \},\quad A^C = [0,1] \setminus A. 
\]
Let $\text{sign}(x)$ be sign of $x$ that ranges in $\{-1, 0, 1\}$.
Throughout this section we adopt an assumption, which we shall relax in the next section. 
\begin{assumption}
\label{a:finite}
 The function $p\mapsto\sign(h(p))$ has at most finitely many discontinuity points. 
\end{assumption}

\begin{lemma}
\label{l:hpositiveonA}
For any $p \in A$, the inequality $h(p) \geq 0$ holds. Equivalently, $h(p) < 0$ implies $p \in A^C$. 
\end{lemma}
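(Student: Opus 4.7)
The plan is to prove the contrapositive: assume $h(p)<0$ and produce $q<p$ with $l(q)\geq l(p)$, which by the definition of $A$ places $p$ in $A^C$.

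First I would recall that $l(p)=F^{(-2)}_Y(p)-F^{(-2)}_X(p)=\int_0^p h(t)\,dt$ by definition of the integrated quantile functions in \eqref{F-2}. Hence $l$ is absolutely continuous with $l'=h$ almost everywhere, and for any $q<p$ we have the identity $l(p)-l(q)=\int_q^p h(t)\,dt$. This reduces the statement to a purely analytic fact about the sign of $h$ near $p$.

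Next I would use the left-continuity of $h$, which follows directly from the left-continuity of the quantile functions $F^{(-1)}_Y$ and $F^{(-1)}_X$ established after \eqref{quantile-function}. If $h(p)<0$, set $\epsilon=-h(p)/2>0$; by left-continuity there exists $\delta>0$ such that $h(t)<h(p)/2<0$ for every $t\in(p-\delta,p]$. Then
\[
l(p)-l(p-\delta)=\int_{p-\delta}^{p} h(t)\,dt \le \delta\cdot\frac{h(p)}{2} < 0,
\]
so $l(p-\delta)>l(p)$. Choosing $q=p-\delta<p$ we obtain $l(q)>l(p)$, violating the defining condition of $A$. Hence $p\in A^C$, which is exactly the contrapositive of what was asserted.

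I do not expect a real obstacle here: the lemma is essentially the observation that $l$ cannot have a strict running maximum at a point where its (left) derivative is negative. Assumption~\ref{a:finite} is not needed for this step, since left-continuity of $h$ already provides the one-sided negativity of $h$ on a small interval ending at $p$; the assumption will presumably be invoked later to control the global structure of the sign of $h$ rather than the local statement proved here.
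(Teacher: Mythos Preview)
Your proof is correct and follows essentially the same approach as the paper: both arguments use the left-continuity of $h$ together with the relation $l(p)-l(q)=\int_q^p h(t)\,dt$ to connect the sign of $h(p)$ with the defining condition of $A$. The paper phrases this directly by computing $h(p)=\lim_{\epsilon\downarrow 0}\frac{l(p)-l(p-\epsilon)}{\epsilon}\ge 0$, whereas you argue the contrapositive by integrating a strictly negative $h$ over a small left interval, but these are the same idea; your observation that Assumption~\ref{a:finite} is unnecessary here is also accurate.
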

\begin{proof}
Fix a number $p\in A.$
The function $h(\cdot)$ is left-continuous on $(0,1)$ by definition. Thus, a small interval $(p-\delta,p)$ exists, such that $h(\cdot)$ is continuous on this interval. Consequently, $l(\cdot)$ is differentiable on this interval and its derivative satisfies $l'(p) = h(p)$. We obtain
\begin{align*}
h(p)  & = \lim_{\epsilon \downarrow 0} h(p-\epsilon)  = \lim_{\epsilon \downarrow 0} l'(p-\epsilon) 
   = \lim_{\epsilon \downarrow 0} \frac{l(p) - l(p-\epsilon)}{\epsilon} \geq 0,
\end{align*}
as claimed. 
\end{proof}
\begin{theorem}
\label{t:A_finite_union}
Under assumption \ref{a:finite}, the sets $A$ and $A^C$ both consist of finitely many intervals.
\end{theorem}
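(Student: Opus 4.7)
My plan is to exploit the finite-partition structure that the assumption places on the sign of $h$ and to determine $A$ piecewise. Since $\sign(h)$ takes values in $\{-1, 0, +1\}$ and has only finitely many discontinuities, it is piecewise constant: there exist $0 = a_0 < a_1 < \cdots < a_n = 1$ such that $\sign(h)$ is constant on each open interval $(a_{k-1}, a_k)$. Because $l(p) = \int_0^p h(t)\,dt$ is absolutely continuous with $l' = h$ almost everywhere, on each closed subinterval $[a_{k-1}, a_k]$ the function $l$ is strictly increasing (if $h > 0$), strictly decreasing (if $h < 0$), or constant (if $h = 0$).

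I would then compute $A \cap (a_{k-1}, a_k]$ in each of the three regimes. When $l$ is decreasing or constant on $[a_{k-1}, a_k]$, every $p \in (a_{k-1}, a_k]$ satisfies $l(a_{k-1}) \geq l(p)$, which violates the strict inequality in the definition of $A$, so the intersection is empty (in accord with Lemma~\ref{l:hpositiveonA}). The strictly increasing case is where the work lies. Let $M_{k-1} = \max_{q \in [0, a_{k-1}]} l(q)$. If $l(a_k) \leq M_{k-1}$, then $l(p) \leq M_{k-1}$ throughout the interval, and since the value $M_{k-1}$ is attained at some $q^\ast \leq a_{k-1} < p$, no point of the interval belongs to $A$. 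Otherwise, continuity and strict monotonicity of $l$ on $[a_{k-1}, a_k]$ yield a unique $c_k \in [a_{k-1}, a_k)$ with $l(c_k) = M_{k-1}$; for $p \in (c_k, a_k]$ we have $l(p) > M_{k-1} \geq l(q)$ for every $q \in [0, a_{k-1}]$ together with $l(p) > l(q)$ on $[a_{k-1}, p)$ by strict monotonicity, while for $p \in [a_{k-1}, c_k]$ either $l(p) < M_{k-1}$ or $l(p) = M_{k-1}$ is also attained at an earlier point. Hence $A \cap (a_{k-1}, a_k] = (c_k, a_k]$, an interval.

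Gluing the finitely many pieces yields $A$ as a union of at most $n$ intervals, from which $A^C = [0, 1] \setminus A$ is automatically also a finite union of intervals. The main obstacle I expect is the careful bookkeeping of strict-versus-non-strict inequalities at the split points $c_k$ and at the interval endpoints $a_k$, and propagating the running maximum $M_{k-1}$ correctly across transitions between adjacent pieces (for example when a constant or decreasing piece follows an increasing one). A conceptually cleaner alternative I might pursue is to introduce the global running maximum $M(p) = \max_{q \in [0,p]} l(q)$ and show that $p \in A$ iff $l(p) = M(p)$ with this maximum unattained at any earlier point; the piecewise-monotone structure of $l$ would then transfer directly to a piecewise-interval structure of $A$ without repeated case analysis.
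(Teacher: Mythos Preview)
Your argument is correct but proceeds along a genuinely different route from the paper. The paper argues by contradiction: if $A$ (and hence $A^C$) had infinitely many connected components, one could extract an infinite alternating sequence $p_1<q_1<p_2<q_2<\cdots$ with $p_i\in A$, $q_i\in A^C$; the definitions of $A$ and $A^C$ then force $l$ to have infinitely many local maxima and minima in between, so its derivative $h$ changes sign infinitely often, contradicting Assumption~\ref{a:finite}. Your approach is constructive: you use the finite partition $0=a_0<\cdots<a_n=1$ on which $\sign(h)$ is constant to make $l$ piecewise strictly monotone or constant, and then you compute $A\cap(a_{k-1},a_k]$ explicitly on each piece, obtaining either $\emptyset$ or an interval $(c_k,a_k]$ determined by the running maximum $M_{k-1}$. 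The endpoint bookkeeping you flag is indeed the only delicate point, and the treatment you outline (strict monotonicity on the increasing pieces, the earlier attainment of $M_{k-1}$ for $p\le c_k$) handles it correctly. The paper's proof is shorter and avoids case analysis; your proof yields more, namely an explicit description of $A$ in terms of the crossing points $c_k$, which is in the spirit of the function $G(p)=\sup_{q\in[0,p]}l(q)$ used later in the proof of Theorem~\ref{t:hatZisprojection}.
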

\begin{proof}
We shall prove this claim by contradiction. The function $l(\cdot)$ is continuous and has directional derivatives as a difference of convex functions. 
Observe that if the statement is not true than both $A$ and $A^C$ consist of countably many intervals with some of the intervals being possibly a single point.
This means that an increasing sequence $p_1 < q_1 < p_2 < q_2 < \cdots$ exists, such that $p_i\in A$ and $q_i\in A^C.$ Due to the definition of the sets $A$ and $A^C$, we infer that a sequence
$\bar{p}_1 < \bar{q}_1 < \bar{p}_2 < \bar{q}_2 < \cdots$ exists, such that 
$p_1 < \bar{p}_1 \leq q_1 <\bar{q}_1 <  p_2 < \bar{p}_2 \leq  q_2 < \bar{q}_2 < \cdots$ and $l(\cdot)$ has a local maximum at $\bar{p}_i$ and a local minimum at $\bar{q}_i$. This entails that $h(\cdot)$ changes signed infinitely many times, which contradicts Assumption~\ref{a:finite}. 
\end{proof}

\begin{lemma}
\label{l:Q-non-decreasing}
The functions $Q(\cdot)$ and $\hat{Q}(\cdot)$ defined as follows
\begin{equation}
\label{e:projection1}
Q(p) = 
  \begin{cases}
    F^{(-1)}_Y(p), & p \in A.     \\
    F^{(-1)}_X(p), & p \in A^C;
  \end{cases}
  \quad \hat{Q}(p) = \lim_{\epsilon \downarrow 0} Q(p-\epsilon).
\end{equation}
are non-decreasing and $\hat{Q}(\cdot)$ is left-continuous. 
\end{lemma}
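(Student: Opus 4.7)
The plan is to exploit Theorem~\ref{t:A_finite_union} to partition $(0,1]$ into finitely many maximal subintervals, each entirely contained in either $A$ or $A^C$. On each such subinterval $Q$ coincides with $F^{(-1)}_Y$ or $F^{(-1)}_X$, both non-decreasing, so monotonicity within a subinterval is immediate. The substantive step is checking monotonicity across every boundary between adjacent $A$- and $A^C$-intervals.

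Consider first a boundary $c$ with an $A$-interval to its left and an $A^C$-interval to its right; necessarily $c \in A$. I claim $l(p) \leq l(c)$ for every $p$ in the adjacent $A^C$-interval: since $c \in A$ one has $\sup_{q \leq c} l(q) = l(c)$, and if some $p$ in that $A^C$-interval had $l(p) > l(c)$, the earliest such point would itself satisfy the strict-record condition defining $A$, contradicting the interval's $A^C$ status. Hence $l'_+(c) \leq 0$, and by convexity of $F^{(-2)}_Y$ and $F^{(-2)}_X$ (whose right-derivatives at $c$ are $F^{(-1)}_Y(c^+)$ and $F^{(-1)}_X(c^+)$), one obtains $F^{(-1)}_Y(c^+) \leq F^{(-1)}_X(c^+)$. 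Combined with left-continuity of the quantile functions on $(0,1)$, this yields
\[
Q(c) = F^{(-1)}_Y(c) \leq F^{(-1)}_Y(c^+) \leq F^{(-1)}_X(c^+) \leq F^{(-1)}_X(p) = Q(p)
\]
for every $p$ in the adjacent $A^C$-interval.

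Consider next a boundary $c$ with an $A^C$-interval to its left and an $A$-interval to its right. For $p$ in the $A$-interval, Lemma~\ref{l:hpositiveonA} gives $h(p) \geq 0$, hence $Q(p) = F^{(-1)}_Y(p) \geq F^{(-1)}_X(p) \geq F^{(-1)}_X(c)$; for $p$ in the $A^C$-interval, monotonicity of $F^{(-1)}_X$ gives $Q(p) = F^{(-1)}_X(p) \leq F^{(-1)}_X(c)$. Finally $Q(c) = F^{(-1)}_X(c)$ if $c \in A^C$, while $Q(c) = F^{(-1)}_Y(c) \geq F^{(-1)}_X(c)$ if $c \in A$ (again by Lemma~\ref{l:hpositiveonA}), so monotonicity across $c$ holds in every case.

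Combining the two cases, $Q$ is non-decreasing on $(0,1]$. The function $\hat{Q}(p) = \lim_{\epsilon \downarrow 0} Q(p-\epsilon) = \sup_{q < p} Q(q)$ is then the standard left-continuous envelope of a non-decreasing function: the limit exists by monotonicity, $\hat{Q}$ inherits monotonicity, and left-continuity is automatic from the sup representation. The main obstacle is the running-maximum claim $l(p) \leq l(c)$ in the first boundary case; it is where the finite decomposition from Theorem~\ref{t:A_finite_union} interacts essentially with the strict-inequality definition of $A$.
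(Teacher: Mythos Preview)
Your proof is correct, but it takes a different route from the paper's. The paper argues directly for an arbitrary pair $p_1<p_2$: in the delicate case $p_1\in A$, $p_2\in A^C$ it uses $p_2\in A^C$ to find some $q<p_2$ with $l(q)\ge l(p_2)$, from which an intermediate point $\tilde q\in(q,p_2)$ with $h(\tilde q)\le 0$ exists, and then chains $F^{(-1)}_Y(p_1)\le F^{(-1)}_Y(\tilde q)\le F^{(-1)}_X(\tilde q)\le F^{(-1)}_X(p_2)$ (with one refinement if $p_1>\tilde q$). Notably, that argument never invokes the finite decomposition of Theorem~\ref{t:A_finite_union}; it works for arbitrary $p_1,p_2$ without Assumption~\ref{a:finite}.

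Your approach instead leans on Theorem~\ref{t:A_finite_union} to reduce the problem to boundary checks. At an $A\!\to\!A^C$ boundary $c$ you show $l$ cannot exceed $l(c)$ on the adjacent $A^C$-interval, whence $l'_+(c)\le 0$, i.e.\ $F^{(-1)}_Y(c^+)\le F^{(-1)}_X(c^+)$, and the chain through $c$ follows. This is clean and structural once the decomposition is available; the trade-off is that it uses Assumption~\ref{a:finite} where the paper's direct argument does not. One small wording issue: ``the earliest such point'' in your running-maximum step should be taken as the \emph{first maximizer} of $l$ on $[c,p_0]$, not as $\inf\{p>c:\,l(p)>l(c)\}$, since by continuity the latter satisfies $l=l(c)$ and therefore does not meet the strict-record condition defining $A$. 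With that clarification your contradiction goes through.
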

\begin{proof}
Consider two points $p_1,p_2\in (0,1)$ such that 
$ p_1<p_2$.  The claim is obvious when $p_1, p_2$ are both in $A$ or both in $A^C$. 

When $p_1 \in A^C, p_2 \in A$, using the definition of the set $A$, we obtain
 \[
 Q(p_1) = Q^{(-1)}_X(p_1) \leq Q^{(-1)}_X(p_2) \leq Q^{(-1)}_Y(p_2) = Q(p_2). 
 \]
 The first inequality follows by the monotonicity of the quantile function while the second inequality follows by Lemma~\ref{l:hpositiveonA}. 
 Hence, monotonicity follows in this case.

If $p_1 \in A, p_2 \in A^C$, then  a number $0<q<p_2$ exist, such that $l(q) \geq l(p_2)$. 
Then we observe that $\tilde{q}$ exists such that $0< q < \tilde{q} < p_2$ and $h(\tilde{q}) \leq 0$. 
Indeed, if this is not true, then we infer that $h(\tilde{q})>0$ on $(q,p_2)$, which  contradicts  $l(q) \geq l(p_2)$. If $p_1 \leq \tilde{q}$, then 
\[
Q(p_1) = Q^{(-1)}_Y(p_1) \leq Q^{(-1)}_Y(\tilde{q}) \leq Q^{(-1)}_X(\tilde{q}) \leq Q^{(-1)}_X(p_2) = Q(p_2),
\]
obtaining monotonicity again.
If $p_1 > \tilde{q}$, then $l(p_1) > l(q)\geq l(p_2)$. Thus $\tilde{q}' \in [p_1,p_2]$ exists, such that $h(\tilde{q}') \leq 0$. Then we proceed as in the case of $p_1\leq \tilde{q}'$. 
This completes the proof for $Q(\cdot)$.

Existence of the limit in the definition of $\hat{Q}$ follows from Lemma~\ref{l:hpositiveonA} and monotonicity is preserved when taking the limit. 
The left-continuity of $\hat{Q}$ is guaranteed by construction.
\end{proof}

\begin{theorem}
\label{t:hatZssdY}
Let the random variable $\hat{Z}$ be defined by setting $\hat{Z}= \hat{Q}(U)$, where $U$ is the uniform random variable. 
Assuming \ref{a:finite}, the random variable $\hat{Z}$ dominates $Y$ in the second order, i.e., 
$F^{(-2)}_{\hat{Z}}(p) \geq F^{(-2)}_Y(p)$ for all  $p \in [0,1]$. 
\end{theorem}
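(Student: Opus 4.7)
The plan is to define $\phi(p) := F^{(-2)}_{\hat{Z}}(p) - F^{(-2)}_Y(p)$ and to prove $\phi(p) \geq 0$ for all $p \in [0,1]$ by showing that $\phi$ is absolutely continuous, non-decreasing, and vanishes at $p=0$.

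First, by Lemma~\ref{l:Q-non-decreasing} the function $\hat{Q}$ is non-decreasing and left-continuous, hence it coincides with the quantile function $F^{(-1)}_{\hat{Z}}$. Because $\hat{Q}$ and $Q$ differ only at the finitely many boundary points of $A$ (via Theorem~\ref{t:A_finite_union}), we obtain $F^{(-2)}_{\hat{Z}}(p) = \int_0^p Q(t)\,dt$. Consequently $\phi$ is absolutely continuous with $\phi'(t) = 0$ on $A$ and $\phi'(t) = F^{(-1)}_X(t) - F^{(-1)}_Y(t) = -h(t)$ on $A^C$. In particular, on each maximal interval $(a,b) \subseteq A^C$ the identity $\phi(p) - \phi(a) = -\int_a^p h(t)\,dt = l(a) - l(p)$ holds.

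The core of the argument is a structural claim: on every maximal interval of $A^C$ with left endpoint $a$ (with the convention $l(0)=0$ when $a=0$, and otherwise $a \in A$), the inequality $l(t) \leq l(a)$ holds for all $t$ in this interval. I would establish this by contradiction. Suppose $l(s_0) > l(a)$ for some $s_0$ in the interval and set $s := \inf\{t \in [a, s_0] : l(t) = \max_{[a, s_0]} l\}$, which is attained by continuity of $l$ on a compact set. Then $s > a$ and $l(s) > l(a)$. For any $q < s$: if $q \leq a$, the defining property of $A$ (respectively the equality $l(0) = 0$ when $a = 0$) gives $l(q) \leq l(a) < l(s)$; if $q \in (a, s)$, the minimality of $s$ gives $l(q) < l(s)$. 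Hence $l(q) < l(s)$ for every $q < s$, so $s \in A$, contradicting $s \in (a,b) \subseteq A^C$. This case analysis and the careful choice of $s$ is the step I expect to be the main obstacle.

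With the structural claim in hand, $\phi(p) - \phi(a) = l(a) - l(p) \geq 0$ on every maximal interval of $A^C$, while $\phi$ is constant on the connected components of $A$. Continuity of $\phi$ glues these pieces together to show that $\phi$ is non-decreasing on $[0,1]$. Since $\phi(0) = 0$, we conclude $\phi(p) \geq 0$ for all $p$, which is precisely condition $(L)$ for $\hat{Z} \succeq_{(2)} Y$.
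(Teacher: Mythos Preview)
Your approach via the function $\phi$ and the structural claim $l(t)\le l(a)$ on each maximal $A^C$-interval is essentially the same interval-by-interval induction the paper carries out, only packaged more cleanly. The structural claim and its proof are correct, including the assertion that the left endpoint $a>0$ of such an interval belongs to $A$: since $(a-\epsilon,a)\subset A$, the function $l$ is strictly increasing there, so $l(q)<l(p)<l(a)$ for every $q<p<a$, forcing $a\in A$.

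There is, however, one genuine slip in the final step. From ``$\phi(p)\ge\phi(a)$ on every $A^C$-interval'' and ``$\phi$ constant on every $A$-interval'' you \emph{cannot} infer that $\phi$ is non-decreasing on $[0,1]$. On an $A^C$-interval one has $\phi(p)=\phi(a)+l(a)-l(p)$, and $l$ may first drop below $l(a)$ and then partially recover while remaining $\le l(a)$; then $\phi$ rises and subsequently falls within that interval. What \emph{does} follow is the weaker but sufficient statement: denoting the successive interval endpoints by $0=a_1<a_2<\cdots<a_k\le 1$, continuity of $\phi$ and the within-interval bound give $\phi(a_{j+1})\ge\phi(a_j)$ for every $j$, hence $\phi(a_j)\ge\phi(0)=0$, and therefore $\phi(p)\ge\phi(a_j)\ge 0$ for all $p$ in the $j$-th interval. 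Replace the claim ``$\phi$ is non-decreasing'' by this chaining argument and your proof is complete; this is precisely the induction the paper performs, though the paper writes it out case by case rather than through the auxiliary function~$\phi$.
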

\begin{proof}
Since $A$ and $A^C$ split $[0,1]$ in finitely many intervals, we shall argue by going over those intervals consecutively.
Consider the first interval after 0. If it is in $A$, then it follows that $F^{(-1)}_{\hat{Z}}(p) = F^{(-1)}_{Y}(p)$ and $F^{(-2)}_{\hat{Z}}(p) = F^{(-2)}_{Y}(p)$ on this interval. 

If it is in $A^C$, then $F^{(-1)}_{\hat{Z}}(p) = F^{(-1)}_{X}(p)$, $F^{(-2)}_{\hat{Z}}(p) = F^{(-2)}_{X}(p)$, with $l(p) = F^{(-2)}_{Y}(p) - F^{(-2)}_{X}(p) \leq l(0) = 0$. 
Hence, $F^{(-2)}_{\hat{Z}}(p) \geq F^{(-2)}_{Y}(p)$ on this interval. 

Now take $0 \leq p_1 < p_2 \leq 1$, when $p_{1}$ and $p_2$ fall into adjacent intervals (e.g. $p_1$ is in the first interval and $p_2$ in the second interval.) 
Suppose $F^{(-2)}_{\hat{Z}}(p_1) \geq F^{(-2)}_Y(p_1)$. \\
When $p_1 \in A$ and  $p_2 \in A^C$, then a point $q$ exists, such that $[p_1, q] \subset A$, $(q, p_2] \subset A^C$. We infer that 
\[
l(p_2) = F^{(-2)}_Y(p_2) - F^{(-2)}_X(p_2) \leq F^{(-2)}_Y(q) - F^{(-2)}_X(q) = l(q). 
\]
The last inequality implies that 
\[
F^{(-2)}_X(p_2) - F^{(-2)}_X(q) \geq F^{(-2)}_Y(p_2) - F^{(-2)}_Y(q). 
\]
We obtain the following chain of relations
\begin{align*}
F^{(-2)}_{\hat{Z}}(p_2) & = F^{(-2)}_{\hat{Z}}(q)  + \int^{p_2}_{q} \hat{Q}(p) d p  
        =  F^{(-2)}_X(p_2) - F^{(-2)}_X(q) + F^{(-2)}_{\hat{Z}}(q)\\
       & \geq F^{(-2)}_Y(p_2) - F^{(-2)}_Y(q) + F^{(-2)}_{\hat{Z}}(q) 
        \geq F^{(-2)}_Y(p_2) - F^{(-2)}_Y(p_1) + F^{(-2)}_{\hat{Z}}(p_1) \\
        &\geq F^{(-2)}_Y(p_2).
\end{align*}
We infer that $F^{(-2)}_{\hat{Z}}(p_2)\geq F^{(-2)}_Y(p_2).$

In the case of $p_1 \in A^C$, $p_2 \in A$, we can identify the number $q$ such that $[p_1, q] \subset A^C$, $(q, p_2] \subset A$. Then we get $F^{(-2)}_{\hat{Z}}(q) \geq F^{(-2)}_Y(q)$ either because $q$ is in the first interval, or by the conclusions from the previous interval. Using the definition of $\hat{Z}$ and its quantile function, we obtain  
\begin{align*}
F^{(-2)}_{\hat{Z}}(p_2) & = \int^{p_2}_{q} F^{(-1)}_{\hat{Z}}(p) d p + F^{(-2)}_{\hat{Z}}(q) 
			  =  F^{(-2)}_Y(p_2) - F^{(-2)}_Y(q) + F^{(-2)}_{\hat{Z}}(q)\\
			 & \geq F^{(-2)}_Y(p_2) - F^{(-2)}_Y(q) + F^{(-2)}_{Y}(q) = F^{(-2)}_Y(p_2).
\end{align*}
Due to the finite number of interval, we conclude that  $F^{(-2)}_{\hat{Z}}(p) \geq F^{(-2)}_Y(p)$ holds for all $p\in (0,1]$.
\end{proof}

\begin{theorem}
\label{t:hatZisprojection}
The random variable $\hat{Z}$ defined in Theorem~\ref{t:hatZssdY} is the projection of $X$ onto $A_2(Y)$ under Assumption \ref{a:finite}. Furthermore,
\begin{equation}
\label{e:dist-final}
\begin{aligned}
\dist(X,A_2(Y)) & = W(X,\hat{Z}) = \sup_{p \in [0,1]} \big(F^{(-2)}_Y(p) - F^{(-2)}_X(p)\big)\\ 
& =
\sup_{\eta \in \Rb} \big(F^{(2)}_X(\eta) - F^{(2)}_Y(\eta)\big).  
\end{aligned}
\end{equation}
\end{theorem}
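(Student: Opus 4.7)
The plan is to combine the lower bound from Theorem~\ref{t:lower_bound} with a direct computation of $W_1(X,\hat{Z})$. Since Theorem~\ref{t:hatZssdY} already certifies $\hat{Z}\in A_2(Y)$, the inequality $\dist(X,A_2(Y))\le W_1(X,\hat{Z})$ is immediate, and the real task is to show
\[
W_1(X,\hat{Z})=\sup_{p\in[0,1]}\bigl(F^{(-2)}_Y(p)-F^{(-2)}_X(p)\bigr).
\]
This equality, together with Theorem~\ref{t:lower_bound}, sandwiches $\dist(X,A_2(Y))$ between two equal quantities, identifies $\hat{Z}$ as a minimizer, and confirms \eqref{e:dist-final} (whose last identity is precisely the second equality in Theorem~\ref{t:lower_bound}).

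First I would compute $W_1(X,\hat{Z})=\int_0^1\bigl\lvert\hat{Q}(p)-F^{(-1)}_X(p)\bigr\rvert\,dp$. Under Assumption~\ref{a:finite}, the functions $\hat{Q}$ and $Q$ agree except at the finitely many common endpoints of the $A$- and $A^C$-intervals, so this integral coincides with $\int_0^1\bigl\lvert Q(p)-F^{(-1)}_X(p)\bigr\rvert\,dp$. On $A^C$ the integrand vanishes by the definition of $Q$, while on $A$ Lemma~\ref{l:hpositiveonA} gives $Q(p)=F^{(-1)}_Y(p)\ge F^{(-1)}_X(p)$, so the integrand equals $h(p)$. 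This yields $W_1(X,\hat{Z})=\int_A h(p)\,dp$.

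Next I would evaluate $\int_A h\,dp$ by telescoping. Since $l(\cdot)$ is absolutely continuous with $l'=h$ almost everywhere, on each $A$-interval $[a_k,b_k]$ one has $\int_{a_k}^{b_k}h\,dp=l(b_k)-l(a_k)$. The key observation is $l(a_k)=l(b_{k-1})$ for all $k\ge 1$, where $b_0:=0$: on any non-trivial $A^C$-gap $(b_{k-1},a_k)$, the definition of $A^C$ forces $l(p)\le l(b_{k-1})$, while the strict inequality defining $A$ forces $l(p)>l(b_{k-1})$ for $p$ just above $a_k$, and continuity of $l$ pins the limit to $l(b_{k-1})$. Summing telescopically over the finitely many $A$-intervals (Theorem~\ref{t:A_finite_union}) gives $\int_A h\,dp=l(b_K)-l(0)=l(b_K)$, and because any trailing $A^C$-segment keeps $l$ no higher than $l(b_K)$, we conclude $l(b_K)=\sup_{p\in[0,1]}l(p)$.

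Combining the two steps gives $\dist(X,A_2(Y))\le W_1(X,\hat{Z})=\sup_p l(p)\le\dist(X,A_2(Y))$, with the final inequality supplied by Theorem~\ref{t:lower_bound}. Equality throughout identifies $\hat{Z}$ as a projection and establishes \eqref{e:dist-final}. The main obstacle is the telescoping identity $l(a_k)=l(b_{k-1})$: it rests on the delicate interplay between the \emph{strict} inequality in the definition of $A$ and the \emph{non-strict} inequality in $A^C$, together with continuity of $l$, and one must handle the boundary cases (leading or trailing $A^C$-segments, and degenerate gaps where $b_{k-1}=a_k$) with care so that no contribution is double-counted or omitted.
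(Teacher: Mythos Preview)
Your proposal is correct and follows the same overall strategy as the paper: establish $\hat{Z}\in A_2(Y)$ via Theorem~\ref{t:hatZssdY}, reduce $W_1(X,\hat{Z})$ to $\int_A h(p)\,dp$, show this equals $\sup_{p\in[0,1]}l(p)$, and close with the lower bound from Theorem~\ref{t:lower_bound}. The only point of departure is the evaluation of $\int_A h$. The paper introduces the running maximum $G(p)=\sup_{q\in[0,p]}l(q)$, notes that $G(p)=l(p)$ on $A$ and $G$ is constant on each $A^C$-interval, hence $G'=h\cdot\one_A$ almost everywhere with $G(0)=0$, and concludes $\int_A h=G(1)=\sup_{[0,1]}l$ in one stroke. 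Your telescoping argument over the finitely many $A$-intervals, hinging on the identity $l(a_k)=l(b_{k-1})$, reaches the same conclusion but requires the endpoint bookkeeping you flag (open versus closed interval ends, leading/trailing $A^C$-segments). Both arguments are valid; the running-maximum device packages the defining property of $A$ (strict running maxima of $l$) directly and avoids the case analysis, while your version is more elementary and makes the cancellation fully explicit.
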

\begin{proof}
Consider the following function: 
\[ 
G(p) = \sup_{q \in [0,p]} l(q) = \sup_{q \in [0,p]}  \int^q_0 h(t) d t = \sup_{q \in [0,p]} l(q).
\]
Observe that $G(p) = l(p)$ for all $p \in A$, while for $p \in A^C$, the mapping $G(\cdot)$ as a constant on each interval of $A^C$. 
Since $l(\cdot)$ is differentiable, we infer that $G(\cdot)$ is differentiable on $[0,1]$ except for finitely many points, with derivative 
\begin{equation}
G'(p) = 
\begin{cases}
		h(p), & \text{ for } p \in A     \\
		0, & \text{ for } p \in A^C
	\end{cases}.
\end{equation} 
Notice that the definition of $\hat{Z}$ implies the following
\begin{align*}
W(X,\hat{Z}) & = \int_{[0,1]} |F^{(-1)}_{\hat{Z}}(p) - F^{(-1)}_{X}(p)|\; d p 
= \int_{A \cap [0,1]} F^{(-1)}_{Y}(p) - F^{(-1)}_{X}(p)\; d p \\ 
& =  \int_{A \cap [0,1]} h(p)\; d p. 
\end{align*}
The mapping $w: p\to \int_{A \cap [0,p]} h(t)\; d t $ is also differentiable a.e. on $[0,1]$. Denoting the indicator function of the set $A$ by $\one_A(\cdot)$, we express the derivative  $w'(p)=h(p) \one_A(p)$. It is equal to $G'(p)$ a.e. on $[0,1]$ with the exception of finitely many points. Since $w(p)=G(0)=0$, we conclude that $w(p) = G(p)$ for any $p\in[0,1]$. This implies
\[
 W(X,\hat{Z}) = w(1) = G(1) = \sup_{p \in [0,1]} l(p) =\dist(X,A_2(Y)).
 \]
The equivalent formula for $\dist(X,A_2(Y))$ follows from Theorem~\ref{t:lower_bound}.
\end{proof}

\section{Generalization}
\label{s:generalization}

The conclusions in section~\ref{s:projection} are obtained under the assumption that $\sign(h(p))$ has finitely many discontinuity points. In this section, we show that we can remove this assumption. 
Let the random variables $X, Y\in\Lc_1(\Omega,\Fc,P)$ be fixed. 
\begin{theorem}
For the random variable $X$, the optimal value of problem \eqref{p:projection2} with $\ell=1$ and $r=2$, the distance satisfies the following formulae:
\begin{equation}
\label{e:distance-final}
\begin{aligned}
\dist (X,A_2(Y)) & = \max\Big(0, \sup_{p \in (0,1]} \big(F^{(-2)}_Y(p) - F^{(-2)}_X(p)\big)\Big) \\ 
& = \max\Big(0, \sup_{\eta \in \Rb} \big(F^{(2)}_X(\eta) - F^{(2)}_Y(\eta)\big)\Big).
\end{aligned}
\end{equation}
\end{theorem}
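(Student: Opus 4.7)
The plan is to split into the trivial case $X\succeq_{(2)}Y$ and the non-trivial case, re-using the construction of $\hat Z$ from Section~\ref{s:projection} and replacing the finite-interval induction of Theorem~\ref{t:hatZssdY} with a global running-maximum argument. In the trivial case, $X\in A_2(Y)$ gives $\dist(X,A_2(Y))=0$; moreover $l(p)=F^{(-2)}_Y(p)-F^{(-2)}_X(p)$ satisfies $l\le 0$ with $l(0)=0$, so $\sup_{p\in[0,1]}l(p)=0$, and the equivalence with the $\sup_\eta$ expression is already part of Theorem~\ref{t:lower_bound}.

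For the non-trivial case I first observe that Lemmas~\ref{l:hpositiveonA} and~\ref{l:Q-non-decreasing} do not actually depend on Assumption~\ref{a:finite}: their proofs use only left-continuity of $h$ and the definition of $A$. Hence $\hat Z=\hat Q(U)$ is well-defined in $\Lc_1(\Omega,\Fc,P)$ in the present generality, and by Lemma~\ref{l:hpositiveonA} one has $|F^{(-1)}_{\hat Z}(p)-F^{(-1)}_X(p)|=h(p)\one_A(p)$, so $W(X,\hat Z)=\int_A h(p)\,dp$. The two remaining claims are $\hat Z\succeq_{(2)}Y$ and $\int_A h\,dp=\sup_p l(p)$, and I would prove both simultaneously via the running supremum $\tilde G(p)=\sup_{q\in[0,p]}l(q)$. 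Since $l$ is absolutely continuous with $l'=h$ a.e., the function $\tilde G$ is likewise absolutely continuous; on each connected component of the open set $\{\tilde G>l\}$ the value of $\tilde G$ is constant, while at any point of $\{\tilde G=l\}$ at which both $\tilde G$ and $l$ are differentiable the difference $\tilde G-l$ attains a local minimum, forcing $\tilde G'(p)=l'(p)=h(p)$. Thus $\tilde G'(p)=h(p)\one_{\{\tilde G=l\}}(p)$ a.e.

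The delicate step, which I expect to be the main obstacle, is showing that $\{\tilde G=l\}\setminus A$ is countable, so that $\one_A=\one_{\{\tilde G=l\}}$ a.e. If $p\in\{\tilde G=l\}\setminus A$, choose $q<p$ with $l(q)\ge l(p)$; the monotonicity of $\tilde G$ and the identity $\tilde G(p)=l(p)$ force $l(q)=l(p)=\tilde G(q)=\tilde G(p)$, and a short case analysis identifies $p$ as the right endpoint of one of the at-most-countably-many open components of $\{\tilde G>l\}$. Integrating the derivative formula then yields $\tilde G(p)=\int_0^p h(t)\one_A(t)\,dt$. Combining this with the definition of $\hat Q$, $F^{(-2)}_{\hat Z}(p)-F^{(-2)}_X(p)=\tilde G(p)$, so $F^{(-2)}_{\hat Z}(p)-F^{(-2)}_Y(p)=\tilde G(p)-l(p)\ge 0$ for all $p\in[0,1]$, establishing $\hat Z\succeq_{(2)}Y$. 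Setting $p=1$ gives $W(X,\hat Z)=\tilde G(1)=\sup_{p\in[0,1]}l(p)$. Combined with the lower bound of Theorem~\ref{t:lower_bound}, this gives $\dist(X,A_2(Y))=\sup_p l(p)=\max\bigl(0,\sup_p l(p)\bigr)$, and the equivalence with $\sup_\eta\bigl(F^{(2)}_X(\eta)-F^{(2)}_Y(\eta)\bigr)$ is again supplied by Theorem~\ref{t:lower_bound}.
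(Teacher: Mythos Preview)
Your approach is genuinely different from the paper's. The paper does not attempt to remove Assumption~\ref{a:finite} from the construction of $\hat Z$; instead it approximates $X$ and $Y$ by simple random variables $X_\varepsilon=\Eb[X\mid\mathcal B]$ and $Y_\varepsilon=\Eb[\tilde Y\mid\tilde{\mathcal B}]$ on finite partitions, applies the Section~\ref{s:projection} result to the pair $(X_\varepsilon,Y_\varepsilon)$ for which Assumption~\ref{a:finite} holds trivially, and then passes to the limit using $\sup_\eta|F^{(2)}_{X_\varepsilon}(\eta)-F^{(2)}_X(\eta)|\le\|X_\varepsilon-X\|_1$ together with the lower bound of Theorem~\ref{t:lower_bound}. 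Your direct route is more informative: it also produces the explicit projection $\hat Z$ in full generality, which the paper's limiting argument does not.

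However, the ``delicate step'' as you state it is false. The set $\{\tilde G=l\}\setminus A$ need not be countable: take $l$ strictly increasing on $[0,a]$ and constant on $[a,b]$; then $\tilde G=l$ on $[0,b]$, while every $p\in(a,b]$ satisfies $l(a)=l(p)$ and hence $p\notin A$. None of these $p$'s is adjacent to a component of $\{\tilde G>l\}$, so your ``short case analysis'' cannot succeed as written. The repair is to weaken the target: you only need $h\,\one_A=h\,\one_{\{\tilde G=l\}}$ a.e., i.e.\ $h=0$ a.e.\ on $\{\tilde G=l\}\setminus A$. Split this set into (i) points $p$ for which some $q<p$ with $l(q)=l(p)$ exists in every left neighbourhood of $p$, and (ii) points $p$ with $l<l(p)$ on some $(p-\delta,p)$. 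On type~(i) points where $l$ is differentiable (hence a.e.), a sequence $q_n\uparrow p$ with $l(q_n)=l(p)$ forces $l'(p)=0$, so $h(p)=0$. Type~(ii) points are exactly right endpoints of components of $\{\tilde G>l\}$, as your argument shows, and are therefore countable. With this correction the rest of your proof goes through.
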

\begin{proof}
If $X\in A_2(Y)$, then 
\begin{gather*}
\sup_{p \in (0,1]} \big(F^{(-2)}_Y(p) - F^{(-2)}_X(p)\big)\leq 0 \quad\text{and}\quad
\sup_{\eta \in \Rb} \big(F^{(2)}_X(\eta) - F^{(2)}_Y(\eta)\big) \leq 0.
\end{gather*}
Hence, formula \eqref{e:distance-final} holds in that case as the distance $\dist (X,A_2(Y))=0$. 

We shall consider the case, when $X\not\in A_2(Y)$. 
The idea of the proof is to approximate problem \eqref{p:projection2} by a sequence of problems for which Assumption 1 is satisfied. 
We have assumed that the probability space is atomless, which entails that without loss of generality, we identify $X = F^{(-1)}_X(U)$, where $U$ has the uniform distribution on $[0,1]$. 
Define 
\[
\Omega_{i} = \{ \omega: \frac{i}{N} < U(\omega) \leq \frac{i+1}{N} \},\quad i=0, \cdots N-1
\] 
and denote the $\sigma-$algebra generated by this partition as $\mathcal{B}$. We define $X_{\varepsilon} = \mathbb{E}[X|\mathcal{B}]$.
Due to the integrability of $X$, for every $\varepsilon > 0$, we can choose $N$ such that $\|X_{\varepsilon} - X\| < \varepsilon$, where
$\|X_\varepsilon-X\|$ stands for the $\Lc_1$ norm. 
Similarly, we define the random variable $\tilde{Y} = F^{(-1)}_Y(U)$, which has the same distribution as $Y$. We partition $\Omega$ further into $MN$ subsets such that 
\[
\Omega_{ij} = \{ \omega: \frac{i}{N} + \frac{j}{MN} < U(\omega) \leq  \frac{i}{N} + \frac{j+1}{MN} \},\quad 
i=0, \cdots N-1,\; j=0,\dots, MN. 
\]
Denoting the $\sigma-$algebra generated by this finer partition by $\mathcal{\tilde{B}}$, we define $Y_{\varepsilon} = \mathbb{E}[X|\mathcal{\tilde{B}}]$.
Choosing $M$ large enough, we ensure that $\|Y_{\varepsilon} - Y\| < \varepsilon.$
Now, we look at the optimization problem:
\begin{equation}
\label{p:approximate}
\min_{Z} \;  \mathbb{E}[Z] - \mathbb{E}[X_\varepsilon] \quad 
\text{s.t. }\;  Z \succeq_{(1)} X_\varepsilon, \quad
 Z \succeq_{(2)} Y_\varepsilon. 
\end{equation}
Assumption 1 is satisfied for this problem. 
Hence, we can use the constructed projection of $X_\varepsilon$ onto $A(Y_\varepsilon)$; we denote it by ${Z}_\varepsilon$. Using Theorem~\ref{t:equivalence}, we infer that 
\[
\dist\big(X_\varepsilon,A(Y_\varepsilon)\big)=W(X_\varepsilon,Z_\varepsilon) = \sup_{\eta\in\Rb}\big(F_{X_\varepsilon}^{(2)}(\eta)- F_{Y_\varepsilon}^{(2)}(\eta)\big)= \Eb(Z_\varepsilon) -\Eb(X_\varepsilon).
\]
We have 
\[
\big|F^{(2)}_{X_\varepsilon}(\eta) - F^{(2)}_X(\eta)\big| = \big|\Eb \big[(\eta- X_\varepsilon)_+ - (\eta- X)_+\big]\big|\leq \Eb \big|(\eta- X_\varepsilon)_+ - (\eta- X)_+\big|\leq  \|X_\varepsilon - X\|.
\]
We infer that 
\[
\sup_\eta\big|F^{(2)}_{X_\varepsilon}(\eta) - F^{(2)}_X(\eta)\big| \leq \|X_\varepsilon - X\|\xrightarrow[\varepsilon\to\infty]{} 0.
\]
Analogously, $\sup_\eta\big|F^{(2)}_{Y_\varepsilon}(\eta) - F^{(2)}_Y(\eta)\big| \leq \|Y_\varepsilon - Y\|\xrightarrow[\varepsilon\to\infty]{} 0.$
Therefore, we derive the following:
\begin{align*}
\sup_{\eta\in\Rb}& \big(F_{X_\varepsilon}^{(2)}(\eta)- F_{Y_\varepsilon}^{(2)}(\eta)\big)+ \Eb(X_\varepsilon) \\ 
& = 
\sup_{\eta\in\Rb}\big(F_{X_\varepsilon}^{(2)}(\eta) + F_{X}^{(2)}(\eta) - F_{X}^{(2)}(\eta) 
+F_{Y}^{(2)}(\eta) - F_{Y}^{(2)}(\eta) - F_{Y_\varepsilon}^{(2)}(\eta)\big)+ \Eb(X_\varepsilon) \\
& \leq \sup_{\eta\in\Rb}\big(F_{X_\varepsilon}^{(2)}(\eta) - F_{X}^{(2)}(\eta)\big) + \sup_{\eta\in\Rb}\big(F_{X}^{(2)}(\eta)- F_{Y}^{(2)}(\eta)\big)\\ 
&  +\sup_{\eta\in\Rb}\big(F_{Y}^{(2)}(\eta) - F_{Y_\varepsilon}^{(2)}(\eta)\big)+ \Eb(X_\varepsilon)
 \xrightarrow[\varepsilon\to\infty]{} 
 \sup_{\eta\in\Rb}\big(F_{X}^{(2)}(\eta)- F_{Y}^{(2)}(\eta)\big) + \Eb(X)
\end{align*}
Hence, $\mathbb{E}[Z_\varepsilon]$ has a limit when $\varepsilon\to 0$. 

Due to the construction of the variables $Y_\varepsilon$, the following relations hold: 
\[
{Z}_\varepsilon  \succeq_{(2)} Y_\varepsilon \succeq_{(2)} Y.
\]
The second order dominance $Y_\varepsilon \succeq_{(2)} Y$ follows by virtue of Jensen's inequality: 
\[
\mathbb{E}[(\eta - \mathbb{E}[Y|\mathcal{\tilde{B}}])_+] \leq \mathbb{E}[(\eta - Y)_+].
\]
According to Lemma~\ref{l:smallerdist}, variables $\hat{Z}_\varepsilon$ exist such that $\hat{Z}_\varepsilon$ are feasible for 
problem \eqref{p:approximate} and $W(X,\hat{Z}_\varepsilon)\leq W(X,Z_\varepsilon).$
Since $Z_\varepsilon\in A_2(Y)$, we have
\begin{align*}
\dist(X,A_2(Y))& \leq W(X,\hat{Z}_\varepsilon)\leq W(X,Z_\varepsilon) \leq W(X_\varepsilon,{Z}_\varepsilon) + W(X_\varepsilon,X)\\ 
& = \sup_{\eta\in\Rb}\big(F_{X_\varepsilon}^{(2)}(\eta)- F_{Y_\varepsilon}^{(2)}(\eta)\big) + W(X_\varepsilon,X).
\end{align*}
Using Theorem~\ref{t:lower_bound}, we obtain
\[
\sup_{\eta\in\Rb}\big(F_{X}^{(2)}(\eta)- F_{Y}^{(2)}(\eta)\big) \leq \dist(X,A_2(Y))\leq \sup_{\eta\in\Rb}\big(F_{X_\varepsilon}^{(2)}(\eta)- F_{Y_\varepsilon}^{(2)}(\eta)\big) + W(X_\varepsilon,X).
\]
Passing to the limit with $\varepsilon\to\infty$ and having in mind that convergence in $\Lc_1$ implies convergence with respect to $W_1$, we obtain the result.
\end{proof}

\section{Numerical Solution of the Transportation-Distance Relaxations}
\label{s:method}

In this section, we propose a method for the numerical solution of problem \eqref{p:relaxedproblem} and show its convergence. 

Given the analytic expression for the distance $\dist(X,A_2(Y))$, we reformulate problem \eqref{p:relaxedproblem} as follows.
\begin{equation}
\label{p:relaxedp2}
\min_{x} \, f(x) + \alpha \dist(G(x),A_2(Y)) \quad \text{s.t. }  x \in \mathcal{X}. 
\end{equation}
We assume that the objective function $f : \Rb^n \to \Rb$ is a convex function, $\Xc \subseteq \Rb^n$ is a compact convex set. The random variable $Y \in \Lc_1(\Omega, \Fc, P)$ is a given benchmark. The operator $G(\cdot)$ is continuous with respect to the $\Lc_1-$norm and it has the form
\[
[G(x)](\omega) = g(x,\omega),
\]
where $g:\Rb^n\times \Omega\to\Rb$ is such that $g(x,\cdot)$ is $\Fc$-measurable for all $x\in\Rb^n$ and $g(\cdot,\omega)$ is a concave function for all $\omega\in\Omega.$ As $g(\cdot,\omega)$ is a concave function and the set $\Xc$ is compact, $g(\cdot,\omega)$  is globally Lipschitz continuous on $\Xc$ with an associated constant $L(\omega)>0.$ We assume that $L(\omega)$ is an integrable random variable.
Furthermore, since $g(\cdot, \omega)$ is a finite-valued concave function, it is continuous everywhere.

Recall that 
$\dist(G(x),A_2(Y)) = \max\Big(0,\sup_\eta \big\{\Eb[(\eta-G(x))_+] - \Eb[(\eta-Y)_+]\big\}\Big).$ 

For a fixed $\omega\in\Omega$, the subdifferential $\partial g(\bar{x},\omega)$ is given by
\[
\partial g(\bar{x},\omega) = \{ s\in\Rb^n: g(x,\omega) \leq g(\bar{x},\omega) + \langle s(\omega), x-\bar{x}\rangle\; \forall x\in\Rb^n\}.
\]
The subdifferential $\partial G(\bar{x})$ is then given by
\[
\partial(G(\bar{x})) = \big\{ S:\Rb^n\to \Lc_1(\Omega,\Fc,P): [Sd](\omega) = \langle s(\omega),d\rangle\; \text{for some } s(\omega)\in\partial g(\bar{x},\omega)\big\}.
\]
Let the multifunction $DH(Z,\eta,\omega)$ for a random variable $Z$ be defined as 
\[
DH(Z,\eta,\omega) =
\begin{cases}
\{-1\} &  \text{if } Z(\omega) < \eta,\\
[-1,0] & \text{if } Z(\omega) = \eta,\\
\{ 0\} & \text{if } Z(\omega) > \eta.
\end{cases}
\]
The subdifferential of $\Eb[\max(0,\eta - G(\bar{x}))]$ takes on the form 
\begin{equation}
\label{e:subdif_eshortfall}
\begin{aligned}
\partial \Eb[\max(0,\eta - G(\bar{x}))] = \big\{ -\Eb[S\xi]:~ & S\in\partial G(\bar{x}),\; \xi \in \Lc_\infty(\Omega,\Fc,P) \\ &\text{with } \xi(\bar{x},\eta,\omega)\in DH(G(\bar{x}),\eta,\omega) \big\}.
\end{aligned}
\end{equation}
To simplify notation, we introduce the following objects:
\begin{align*}
v(\eta)& = \Eb[\max(0,\eta - Y)]\quad \text{for } \eta\in\Rb;\\
\D(x) & = \sup_\eta\big[\Eb[(\eta-G(x))_+] - v(\eta)]\big];\\
J(x) &= \Big\{\eta\in \Rb: \D(x) = \Eb[(\eta-G(x))_+] - v(\eta)>0\Big\}.
\end{align*}
The convex subdifferential of $\D(x)$ has the following form:
\[
 \partial \D (x) = \conv \big\{\cup_{\eta\in J(x)} -\Eb[S\xi]\big\},
 \]
 where $S$ and $\xi$ are given in \eqref{e:subdif_eshortfall}. 
We adopt the convention that the subdifferential of $\D(x)$ contains a zero vector, if $J(x)=\emptyset$.\\

We propose the following method with a parameter $\alpha>0$ for the numerical solution of problem \eqref{p:relaxedp2}.
\begin{description}
\item[]\vspace{0.3cm} \hrule \vspace{0.17cm}
            \textbf{Transportation Distance Relaxation Method}
            \vspace{0.17cm} \hrule \vspace{0.2cm}
\item[\emph{Step 0.}] Set $k=1$ and  $x^0\in\Xc$. Calculate $f(x^0),\,\D(x^0)$
$s_f^0\in\partial f(x^0)$, $s_p^0\in \partial \D(x^0)$ and set $J_f^1=J_p^1=\{ 0\}$.
\item[\emph{Step 1.}]
Solve problem \eqref{master-penalty} and let $({x}^k, \theta_f^k,\theta_p^k)$ denote its solution.
\begin{equation}
\label{master-penalty}
\begin{aligned}
\min_{x,\theta_f,\theta_p}\; &  \theta_f + \alpha \theta_p   \\
\text{s.t.} \ & f(x^j)+ \langle s_f^j, x-x^j\rangle \leq \theta_f,\quad j\in J_f^k,\\
& \D(x^j)+ \langle s_p^j, x-x^j\rangle \leq \theta_p,\quad j\in J_p^k.\\
& x \in \Xc, \;\; \theta_p\geq 0,\;\; \theta_f\in\Rb.
\end{aligned}
\end{equation}
\item[\emph{Step 2.}] For $x^k$, calculate $\D(x^k)$ and $f(x^k)$. 
If $f(x^k)=\theta_f^k$ and either $\D(x^k)\leq 0$ or $\D(x^k) = \theta_p^k$, then stop; otherwise continue.
\item[\emph{Step 3.}]
If $\D(x^k) >\theta_p^k$ then calculate $s_p^{k}\in \partial \D(x^k)$, and set $J_p^{k+1} = J_p^{k}\cup\{k\}$, otherwise set $J_p^{k+1} = J_p^{k}$. 
If $f(x^k) > \theta_f^k,$ then calculate $s_f^k\in\partial f(x^k)$ and set $J_f^{k+1} = J_f^{k-1}\cup\{k\}$, otherwise set $J_f^{k+1} = J_f^{k}$. 
\item[\emph{Step 4.}]
Increase $k$ by one and return to Step 1.
\vspace{0.3cm} \hrule \vspace{0.3cm}
\end{description}

\begin{theorem}
\label{t:convergencePFAM}
Assume that the set $\Xc$ is nonempty, convex and compact, the function $f(\cdot)$ is convex and $G(\cdot)$ is a continuous and concave operator on $\Xc$. Then the Transportation Distance Relaxation Method either stops with an optimal solution of problem \eqref{p:relaxedp2} or it generates a sequence of points $\{x^k\}$, which converges to an optimal solution of that problem.
Additionally, if problem 
\begin{equation}
\label{p:mainp}
\min f(x) \quad \text{s.t. }\; G(x)\ssd Y,\;\; x\in\Xc, 
\end{equation}
is feasible, then for any parameter $\alpha \geq L_f$, the method identifies an optimal solution of problem \eqref{p:mainp} with $L_f$ being the Lipschitz constant of $f$ over the set $\Xc$. 
\end{theorem}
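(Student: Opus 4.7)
The plan is to treat \eqref{p:relaxedp2} as a convex program on a compact set and to apply classical Kelley cutting-plane convergence theory. First I would verify that $\D(\cdot)$ is convex, finite-valued and Lipschitz on $\Xc$: concavity of $g(\cdot,\omega)$ makes each slice $\Eb[(\eta-G(x))_+]-v(\eta)$ convex in $x$, so $\D(x)=\max\bigl(0,\sup_\eta[\Eb[(\eta-G(x))_+]-v(\eta)]\bigr)$ is a pointwise supremum of convex functions; the pointwise bound $|(\eta-a)_+-(\eta-b)_+|\leq|a-b|$ together with the integrable modulus $L(\omega)$ of $g(\cdot,\omega)$ gives $|\D(x_1)-\D(x_2)|\leq\Eb[L(\omega)]\,\|x_1-x_2\|$. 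Convexity of $f$ on the compact set $\Xc$ forces Lipschitz continuity with constant $L_f$, so the subgradients $s_f^j,s_p^j$ generated by the algorithm stay in a bounded set.

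For the convergence assertion I would run the standard Kelley argument. The master problem \eqref{master-penalty} accumulates affine minorants of $f$ and $\D$ together with the valid inequality $\D\geq 0$, so every $x\in\Xc$ satisfying the current cuts obeys $\theta_f^k\leq f(x)$ and $\alpha\theta_p^k\leq\alpha\D(x)$; evaluating at an optimizer of \eqref{p:relaxedp2} yields $\theta_f^k+\alpha\theta_p^k\leq F^*$ with $F:=f+\alpha\D$ and $F^*:=\min_{x\in\Xc}F(x)$. Compactness of $\Xc$ provides a convergent subsequence $x^{k_m}\to\bar x$; validity of the cut from $x^{k_m}$ at every later iterate,
\begin{gather*}
f(x^{k_m})+\langle s_f^{k_m},x^{k_n}-x^{k_m}\rangle\leq\theta_f^{k_n}\leq f(x^{k_n}),\\
\D(x^{k_m})+\langle s_p^{k_m},x^{k_n}-x^{k_m}\rangle\leq\theta_p^{k_n}\leq\D(x^{k_n}),
\end{gather*}
combined with boundedness of the slopes and continuity of $f,\D$, lets me pass to the limit to obtain $\theta_f^{k_n}\to f(\bar x)$ and $\theta_p^{k_n}\to\D(\bar x)$; squeezing between the lower bound $\theta_f^{k_n}+\alpha\theta_p^{k_n}\leq F^*$ and the upper bound $F(x^{k_n})\geq F^*$ then gives $F(\bar x)=F^*$. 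Finite termination is settled by inspection of Step~2: $f(x^k)=\theta_f^k$ together with $\D(x^k)=\theta_p^k$ makes the master value equal to $F(x^k)$, which must coincide with $F^*$; the alternative $\D(x^k)\leq 0$ forces $\D(x^k)=0$, so $x^k$ is feasible for \eqref{p:mainp} and the same equality $F(x^k)=\theta_f^k\leq F^*$ yields optimality.

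For the exact-penalty statement, let $\bar x$ be the limit produced by the method, hence a minimizer of $F$ over $\Xc$, and let $x^*$ be an optimum of \eqref{p:mainp}, so $\D(x^*)=0$ and $F(\bar x)\leq f(x^*)$. On the segment $x_t=(1-t)\bar x+tx^*$, convexity of $\D$ gives $\D(x_t)\leq(1-t)\D(\bar x)$, while Lipschitz continuity of $f$ gives $f(x_t)-f(\bar x)\leq tL_f\|x^*-\bar x\|$, so minimality of $\bar x$ forces
\[
0\leq F(x_t)-F(\bar x)\leq t\bigl(L_f\|x^*-\bar x\|-\alpha\D(\bar x)\bigr),
\]
that is, $\alpha\D(\bar x)\leq L_f\|x^*-\bar x\|$. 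Since $G(x^*)\in A_2(Y)$ is a candidate in the projection problem defining $\D(\bar x)$, Theorem~\ref{t:hatZisprojection} gives $\D(\bar x)\leq W_1(G(\bar x),G(x^*))\leq\Eb|G(\bar x)-G(x^*)|\leq\Eb[L(\omega)]\|\bar x-x^*\|$. Coupling both bounds with the convex optimality conditions of \eqref{p:mainp} closes the argument: for $\alpha\geq L_f$ the only situation consistent with optimality of $\bar x$ is $\D(\bar x)=0$, in which case $\bar x$ is feasible with $f(\bar x)\leq f(x^*)$ and hence optimal for \eqref{p:mainp}.

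The main obstacle is the last step. The penalty $\D$ measures a Monge--Kantorovich distance in outcome space rather than a Euclidean distance in decision space, so the classical projection-based exact-penalty argument does not apply verbatim; threading the Lipschitz constants of $f$ and of $G$ through the projection characterisation of Theorem~\ref{t:hatZisprojection} to rule out $\D(\bar x)>0$ when $\alpha\geq L_f$ is where the proof requires the most care.
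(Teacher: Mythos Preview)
Your convergence argument is essentially the paper's: both recognise \eqref{p:relaxedp2} as a convex program on a compact set, verify that $\D$ is convex and Lipschitz, and then run the standard Kelley cutting-plane analysis, checking the two termination criteria of Step~2 and handling the infinite case via an accumulation-point/squeeze argument. The paper's write-up is terser (it distinguishes whether $J_p^k$ eventually stabilises and otherwise defers to ``the convergence properties of the cutting plane method''), but the mechanism is identical. One small caution: in your displayed chain you assume both an $f$-cut and a $\D$-cut are generated at $x^{k_m}$, whereas Step~3 adds only the cut(s) for which the gap is positive; the argument still goes through, but you should split along which of the two gaps fails to close, as the paper does.

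The exact-penalty part is where the two diverge. The paper does not attempt a direct argument; it simply invokes Clarke's exact penalty theorem \cite[Proposition~2.4.3]{clarkeoptimization} to conclude that, when \eqref{p:mainp} is feasible and $\alpha\geq L_f$, a point solves \eqref{p:mainp} if and only if it solves \eqref{p:relaxedp2}. Your segment computation is a different route and, as you yourself flag, it does not close: from $\alpha\D(\bar x)\leq L_f\|x^*-\bar x\|$ you cannot force $\D(\bar x)=0$ without an error bound of the form $\D(x)\geq c\,\dist\bigl(x,\{x:G(x)\in A_2(Y)\}\bigr)$, and the auxiliary estimate $\D(\bar x)\leq\Eb[L(\omega)]\|\bar x-x^*\|$ points in the same direction and so cannot supply the missing inequality. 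Your diagnosis of the obstacle is correct: Clarke's textbook proposition penalises the \emph{decision-space} distance to the feasible set, whereas here the penalty is the \emph{outcome-space} distance $\dist(G(x),A_2(Y))$, and the two are related only through the Lipschitz behaviour of $G$. The paper resolves this by citation rather than by explicit estimate; if you want a self-contained proof along your lines you would need to supply a metric-regularity link between the two distances, or recast the Lipschitz constant in the image variable $G(x)$ rather than in $x$.
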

\begin{proof}
If the method stops at Step 2 in iteration $k$ with $\D(x^k)\leq 0,$ then $G(x^k)\in A_2(Y)$, implying that problem \eqref{p:mainp} is feasible. Under the assumption of the theorem, the set $\Xc\cap \{ x\in\Xc: G(x)\in A_2(Y)\}$ is closed. Indeed, let $\{x^m\}_{m\in\Nb}\in\Xc$ be a sequence converging to $\bar{x}$. Since $\Xc$ is compact, $\bar{x}\in\Xc$. Due to the continuity of $G(\cdot)$, we infer that $G(x^m)$ converges to $G(\bar{x})$ in $\Lc_1$. The set $ A_2(Y)$ is closed with respect to $\Lc_1$ -convergence entailing that $G(\bar{x})\in A_2(Y)$. Therefore, problem \eqref{p:mainp} has a non-empty and compact  feasible set. This implies that  problem \eqref{p:mainp}  has an optimal solution; denote it by $x^*.$

Since $x^k$ is feasible according to the stopping case, we infer that $f(x^k)\geq f(x^*)$. 
Furthermore, the approximation of the function $f(\cdot)$ given by $\theta_f^k$ is exact at $x^k$. For all points $x$ that are feasible for \eqref{p:relaxedp2}, we have
\[
f(x)\geq \max_{j\in J_f^k} \big(f(x^j)+ \langle s_f^j, x-x^j\rangle\big) = \theta_f^k
\]
implying that $f(x^*)\geq \theta_f^k.$ Hence, $f(x^k)\geq f(x^*)\geq \theta_f^k = f(x^k)$, entailing that $x^k$ is an optimal solution of problem \eqref{p:mainp}.\\

Under the assumption of the theorem problem \eqref{p:relaxedp2} has an optimal solution; denote it by $\hat{x}$.  If the method stops at Step 2 at iteration $k$ and $\D(x^k)> 0$, then $\D(x^k)= \theta_p^k$ and $G(x^k)\not\in A_2(Y)$. 
Using the subgradient inequality and the optimality of $x^k$ for problem \eqref{master-penalty} at iteration $k$, we obtain the following chain of inequalities. 
\begin{align*}
f(\hat{x})+ & \alpha\dist (G(\hat{x}), A_2(Y)) \\  
& \geq \max_{j\in J_f^k} \big(f(x^j)+ \langle s_f^j, \hat{x}-x^j\rangle\big) + \alpha \max_{j\in J_p^k} \big(\D(x^j)+ \langle s_p^j, \hat{x}-x^j\rangle\big)\\ 
& \geq \max_{j\in J_f^k} \big(f(x^j)+ \langle s_f^j, x^k-x^j\rangle\big) + \alpha \max_{j\in J_p^k} \big(\D(x^j)+ \langle s_p^j, x^k-x^j\rangle\big)\\ 
& = \theta_f^k+\alpha\theta_p^k.
\end{align*}
Hence, if the method stops at Step 2 with $\theta_p^k >0$, then it delivers an optimal solution of problem \eqref{p:relaxedp2}. 

The function $f(\cdot)$ and the mapping $G(\cdot)$ are Lipschitz continuous over the set $\Xc$. If the set $\Xc\cap \{ x\in\Xc: G(x)\in A_2(Y)\}$ is non-empty  and  $\alpha \geq L_f$, where $L_f$ is the Lipschitz constant of the function $f$ over the set $\Xc$, then 
the Clarke's exact penalty function theorem (\cite[Proposition 2.4.3]{clarkeoptimization}) is applicable. 
The theorem implies that problem \eqref{p:mainp} has an optimal solution $x^*$  if and only if it solves also problem \eqref{p:relaxedp2}. Hence, if the method stops with $\D(x^k)=\theta_p^k > 0$ and $\alpha$ is large enough, than \eqref{p:mainp} is infeasible. 

Now, assume that the method generates an infinite sequence of points. 
Two cases are possible: a number $k_0$ exists such that the set $J(x^k)=\emptyset$ for all $k\geq k_0$, or such a number does not exist.
In the first case, the set $J_p^k$ remains the same for all $k\geq k_0$. We observe that $\theta_p^k$ remain constant and all points $x^k$ for $k\geq k_0$ are feasible for problem \eqref{p:relaxedp2}: $G(x^k)\in A_2(Y)$ holds for all $k\geq k_0$. Then the method converges to an optimal solution of problem \eqref{p:mainp}, where the convergence of the method follows from the convergence properties of the cutting plane method applied to the function $f$ and the closedness of the feasible set of  problem \eqref{p:mainp}. In the case, when the set $J_p^k$ grows infinitely the convergence statement follows from the convergence of the cutting plane method applied to problem \eqref{p:relaxedp2}. In that case, again an accumulation point $\hat{x}$ of the sequence $\{x^k\}$ exists and the corresponding values $\theta_f^k$ and $\theta_p^k$ converge to $f(\hat{x})$ and $\D(\hat{x})$, respectively. The same observations of the infeasibility of  problem \eqref{p:mainp} hold as in the case of stopping with a positive value for $\theta_p^k.$ 
\end{proof}

Some remarks are in order.
Notice that the calculation of $\D(x)$ might not be trivial as we need to maximize over the entire real line.
However, due to the special structure of the function, $\D(x)$ is always bounded: 
$|\D(x)| \leq |\Eb[G(x)]-\Eb[Y]|$. As both shortfall functions $\Eb[(\eta-G(x))_+]$ and $v(\eta)$ converge to zero when $\eta\to -\infty$ and have an asymptote when $\eta\to\infty$, we could choose a very large interval $[a,b]$ and maximize the difference over $[a,b]$. The task becomes much simpler if the probability space is finite. Then the shortfall functions are piecewise linear and we only need to check the difference $\Eb[(\eta-G(x))_+]-v(\eta)$ at their break points in order to determine the maximum. 

We assume that the decision maker chooses a weight parameter $\alpha$ that represents a reasonable compromise between minimizing the objective function and achieving an outcome with a distribution close to the desired one. This compromise would be a modelling choice depending on context and would provide a specific value of the parameter $\alpha$.  
If the infeasibility of problem \eqref{p:mainp} is not known and we desire to achieve feasibility at all cost if possible at all, then we could modify the proposed method to increase the parameter $\alpha$
if $\D(x^k) >0$. In this way, the weight of the distance in the objective function would increase. If problem \eqref{p:mainp} is feasible the weight of the penalty will eventually become large enough and the modified method will stop at step 2 with the optimal solution of it.  If the method does not stop, then it will produce a sequence $x^k$ with an accumulation point $\bar{x}\in\Xc$. Let $x^m$ with $m\in\Kc\subset\Nb$  be the subsequence converging to $\bar{x}$ with $\Nb$ being the set of natural numbers. Due to the properties of the cutting plane method, the method will converge meaning that $\lim_{m\in\Kc} f(x^m) = f(\bar{x})=\bar{\theta}_f$  and  $\lim_{m\in\Kc} \D(x^m) = \bar{\theta}_p = \min_{x\in\Xc} \D(x) = \D(\bar{x})$. 
If one wishes to increase the penalty parameter $\alpha$ in the course of optimization, one could  choose $\alpha_{k+1} = \max\big(\gamma\alpha_k,\max_{j\in J_f^k}\|s_f^k\|\big)$ for some $\gamma>1$.

\section{Numerical experiments}
\label{s:numerical}

We implemented and tested our method on several test problems focusing mainly on the two-stage problem with a stochastic dominance constraint on the recourse function. The method is implemented in Python with Gurobi as the optimization solver for the master problem \eqref{master-penalty} in Step 1.

In the first experiment, we consider an inspection design which includes 
performing $N$ possible searches. Each search can take time $x_n$ and results in a unit cost of $c_n$ for the  equipment and experts labor involved. The total weighted search time $w^\top x$ is bounded by $M$. The searches increase the quality of the inspection, which is a random quantity depending on $s$ scenarios and on the time vector $x$. The dependence is given by the mapping $Q(x)$ that has concave realizations: functions $q_s(x)$, where $s$  stands for the respective scenario, $s= 1, \cdots, S$. We aim to minimize the expected search time and cost while ensuring good quality. We use a benchmark $Y$ for the resulting distribution.  
The problem formulation, which is a counterpart of \eqref{p:mainp}, is the following:
\begin{align*}
\min_{x} \, &  c^\top  x - \kappa\Eb[Q(x)] \\
\text{s.t. } & G(x)  \ssd Y,\\
&  w^\top x \leq M, \;\; x \geq 0.
\end{align*}
Here $\kappa>0$ is a weighing parameter and $G(x) = Q(x)-c^\top x.$ 
The optimization problem with a relaxation of the dominance constraint is formulated as follows: 
\begin{align*}
\min_{x} \, &  c^\top  x - \kappa\Eb[G(x)] + \alpha \D (x) \\
\text{s.t. }  
& \D(x) \geq   \sum^{S}_{s=1} p_s \max(0, y_j - g_s(x)) - v(y_j)\quad j=1,\dots, S. \\
& w^\top x \leq M, \;\; x \geq 0.
\end{align*}
Recall that $v(y_j) =\Eb[\max(0,\eta - y_j)]$ with $y_j$ being the $j$-th realization of the benchmark $Y$. 
The method is tested with $N=20$, $\kappa=\alpha=1$ and $S=20$, as well as $S=100$. The data is randomly generated. The results are reported in Table~\ref{tab:inspection}
The cumulative distribution function and the shortfall functions $F^{(2)}$ of the benchmark, the solution, and the projection onto the set $A_2(Y)$ are shown in Figures \ref{fig:F1F2_S100} and \ref{fig:F1F2_S300}. 
\begin{table}[h]
\centering
\begin{tabular}{ccccccc}
\hline
 \#Scen. & Expected & Expectation& Expectation  & Distance & CPU & \#Iter.\\
           & outcome  & Projection & benchmark & (sec.)   & \\
\hline
$100$ & $0.2393$ & $0.2533 $ & $0.2532$ & 0.0139 & 0.54 & 8\\
$300$ & $0.0226$ & $0.0270 $ & $0.0270$ & 0.0125 & 3.55 & 8\\
\hline
\end{tabular}
\caption{Experiments data of the inspection design problem}
\label{tab:inspection}
\end{table} 
\begin{figure}[h!]
\centering
\includegraphics[scale=0.41]{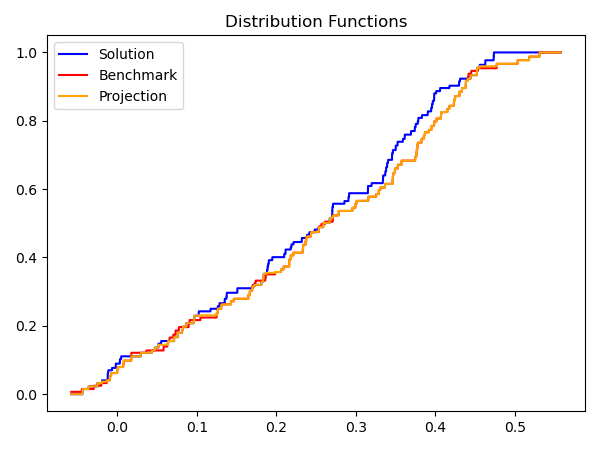}
  \hspace{0.3ex}
  \includegraphics[scale=0.41]{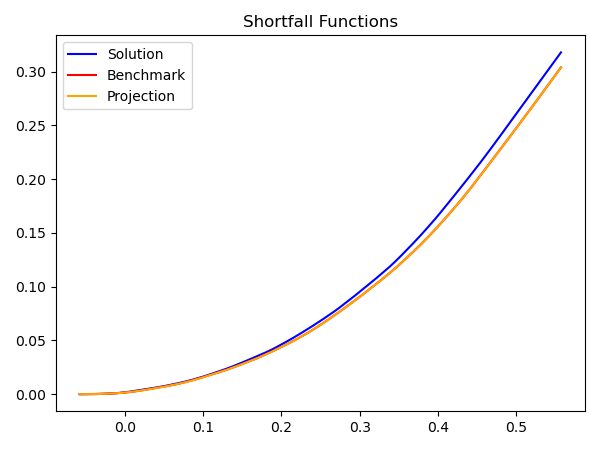}
  \caption{Cumulative distribution functions and the shortfall functions for the case of $S=100$ }
  \label{fig:F1F2_S100}
\end{figure}
\begin{figure}[h!]
\centering
  \includegraphics[scale=0.41]{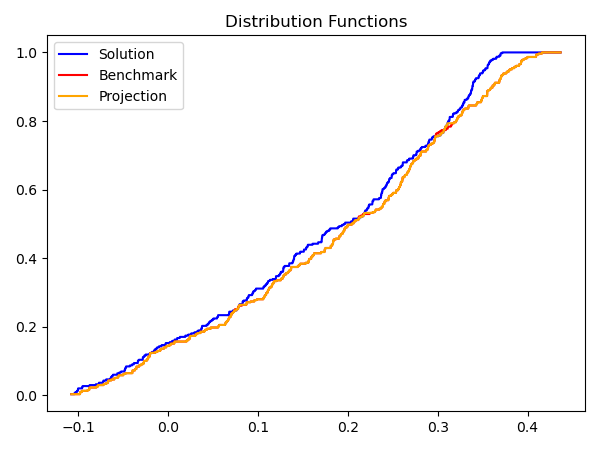}
  \hspace{0.3ex}
  \includegraphics[scale=0.41]{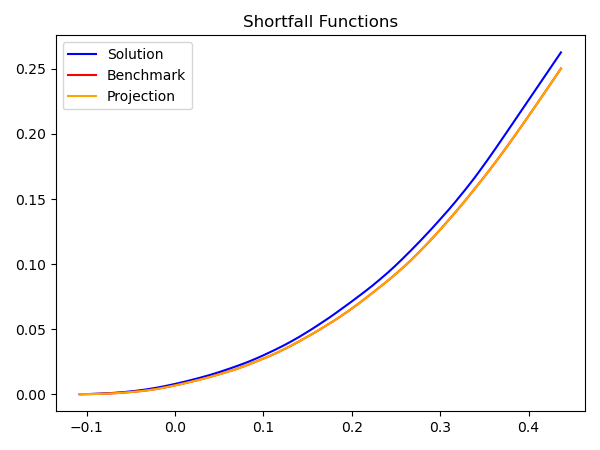}
  \caption{Cumulative distribution functions and the shortfall functions for the case of $S=300.$ }
  \label{fig:F1F2_S300}
\end{figure}

Our second experiment addresses an emergency relief problem, in which a number $I$ of centers deploy units that provide relief if disaster happens in $T$ target areas. Each center may move emergency supply to each target, but within their total capacities and at different costs.
At each target, the demand for emergency supply units is random. In addition to that, the deployment routes may be destroyed and parts of the supply units moved may become unavailable. However, the decisions how to deploy supply units and to choose their routes must be made before the demand or the state of the routes are known. 
The center capacities are denoted by $C_i$ and the unit deployment costs is denoted by $c_i$, $i=1,\dots I$. The unit moving costs from center $i$ to target are $j$ is denoted $q_{ij}$, $i=1,\dots I$, $j=1,\dots T$.  Furthermore, every transportation link may fail with probability 0.3, independently of other links and the demands. If the connection is intact, its throughput is 1, if the connection failed,
its throughput is 0.5, i.e., in the case of failure, the shipment along this link loses half of its value.
We index the route scenarios by $r\in \{1,\dots,N\}$ and denote the route throughput in scenario $r$ by $a_{ij}^r$.
The demand at each target is independent of the other areas and uniformly distributed with finitely many realization $d_j^s$, $j=1,\dots T$ and $s$ being a realization of the joint demand, $s=1,\dots S$.  
At each target, if the demand for relief exceeds the available shipments, the cost of $L$ per each missing unit is incurred.
We denote the numbers of units deployed at center $i$ by $x_i$, $i=1,\dots, I$, and the number of units sent from center $i$ to target $j$, $j=1,\dots,T$ by $v_{ij}$.

First, we formulate a linear programming problem to minimize the expected cost of the relief operation. Let $\sigma_j^{sr}$ represent the shortfall occurring at target $j$ 
under the demand scenario $s$ and the route scenario $r$. We introduce the shorthand notation 
\[
Q^{sr} (x,v,\sigma) = \sum_{i=1}^I c_i x_i + \sum_{i=1}^I\sum_{j=1}^T q_{ij} v_{ij} + L\sum_{j=1}^T \sigma_j^{sr} 
\] 
for the realization of the total cost in each scenario $(s,r)$. 
Then, the risk-neutral problem takes on the following form:
\begin{equation}
\label{p:exp_value_er}
\begin{aligned}
\min_{x,v,\sigma}\; &  \frac{1}{SN} \sum_{s=1}^S\sum_{r=1}^N  Q^{sr} (x,v,\sigma) \\
\text{s.t.}\ & \sum_{j=1}^T  v_{ij} \le x_i, \quad i = 1,\dots,I,\\
             & \sum_{i=1}^I a_{ij}^{r} v_{ij} + \sigma_j^{sr} \ge d_j^{s}, \quad j=1,\dots,T,\; s=1,\dots,S,\; r=1,\dots,N,\\
             & 0 \le x_i \le C_i, \quad i = 1,\dots,I, \quad v \ge 0,\quad \sigma\ge 0.
\end{aligned}
\end{equation}
As the penalty cost is random, we would like to avoid excessive cost. 
Let $B$ denote a benchmark random variable, which has a desirable distribution for the total cost. We add a constraint $Q\preceq_{\rm ico} B$, where the increasing convex order relation $Q\preceq_{\rm ico} B$ reflects our risk aversion. 
Denoting the feasible set of problem \eqref{p:exp_value_er} by $\Xc$, we formulate the risk-averse problem as follows:
\begin{equation}
\label{p:dc_er}
\min_{x,v,\sigma}\;  \frac{1}{SN} \sum_{s=1}^S\sum_{r=1}^N  Q^{sr} (x,v,\sigma) \quad
\text{s.t. }\;  Q\preceq_{\rm ico} B,\;\;
              (x,v,\sigma)\in\Xc.
\end{equation}
Using relation \eqref{e:ico}, we convert the constraint with the increasing convex order to a second-order stochastic dominance constraint as follows $-Q\ssd - B$. 
The relaxation of problem \eqref{p:dc_er} takes on the form 
\begin{equation}
\label{p:relaxed_dc_er}
\min_{x,v,\sigma}\;  \frac{1}{SN} \sum_{s=1}^S\sum_{r=1}^N  Q^{sr} (x,v,\sigma) +  \alpha \dist(-Q, A_2(-B)) \quad
\text{s.t. }\;  (x,v,\sigma)\in\Xc.
\end{equation}
We have solved the problem for $I=2$, $J=3$, and 2700 demand-route scenarios ($N=100$ and $S=27$). 

In the first series of experiments, we have created a benchmark, which results in a non-empty feasible set. First, we solved problem ~\eqref{p:relaxed_dc_er} by the Shortfall Event Cut Method described in \cite[Sec. 7.2.1]{DDARriskbook}, which took three iterations. We solved problem ~\eqref{p:relaxed_dc_er} by the proposed Transportation Distance Relaxation Method for several values of the penalty parameter. The method identified feasibility and reached the same solution as the Shortfall Event Cut Method  for the penalty parameter $\alpha\geq 50$. We report the results for the risk-neutral solution ($\alpha =0$), for two intermediate solutions, and for the case of $\alpha=50$ in Table~\ref{tab:feasible}. The distribution functions of the optimal total cost for those experiments and their integrated survival functions $F^{(2)}$ (also called excess functions) are displayed in Figure~\ref{fig:feasible_distfunctions}.
The CPU model is Intel(R) Core(TM) i7-8569U CPU @ 2.80GHz. The reported CPU time includes intermediate output from Gurobi. 
\begin{table}[h!]
\centering
\begin{tabular}{lcccc}
\hline
 Parameter & Expected cost & Distance  & Number of Iterations & CPU (sec.)\\
\hline
$\alpha=0$ & $2837.33$ & $20.00 $ & 2 & 12.9\\
$\alpha=5$ & $2867.64$ & $6.58$ & 10 & 63.9\\
$\alpha=15$ & $2894.44$ & $1.70$ & 10 & 64.0\\
$\alpha=50$ & $2948.00$ & $0.0$ & 5 & 31.7\\
$\mathrm{benchmark}$ & $3062.56$ & $0$ & - & -\\
\hline\\
\end{tabular}
\caption{Experiments with an achievable benchmark}
\label{tab:feasible}
\end{table} \vspace{-2ex}
\begin{center}
\begin{figure}[h!]
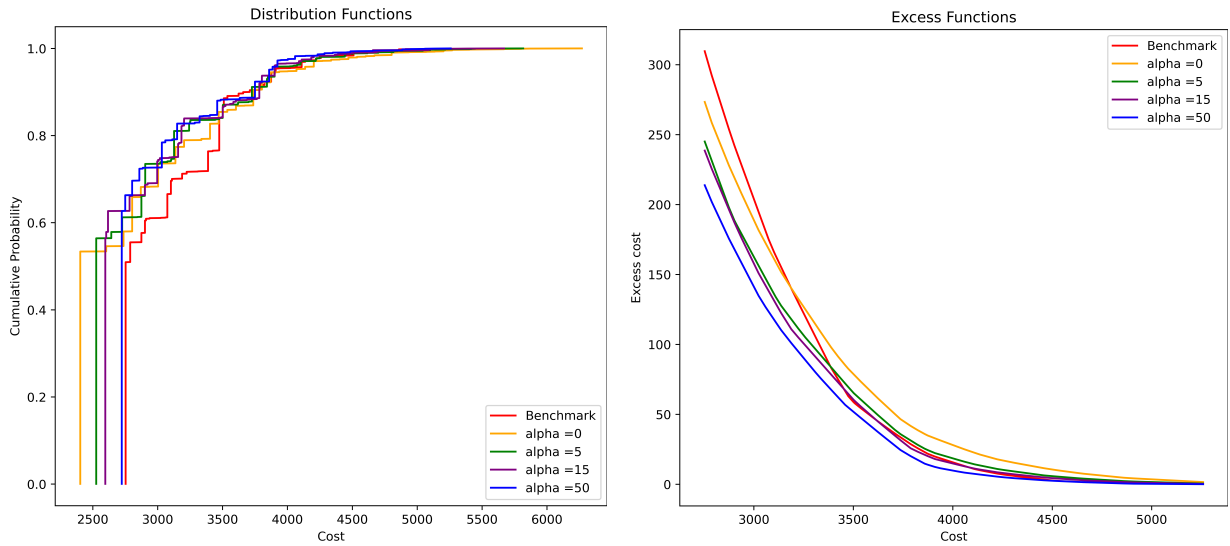

    \includegraphics[width=0.48\textwidth]{Feasible-DF-auto.png} 
    \hspace{1ex}
    \includegraphics[width=0.48\textwidth]{Feasible-F2-auto.png} 
    \caption{Experiments with an achievable benchmark: cumulative distribution functions $F_Q$ and the excess functions $\bar{F}_Q^{(2)}$ of the optimal total cost for different values of the penalty parameter.}
    \label{fig:feasible_distfunctions}
\end{figure}
\end{center}
In the second series of experiments, we have created a benchmark, which results in an empty feasible set. We used the optimal cost of the risk neutral solution and modified the outcomes exceeding its expected value by moving them closer to the expected value.  We solved problem ~\eqref{p:relaxed_dc_er} by the proposed Transportation Distance Relaxation Method for several values of the penalty parameter.  We report the results for the risk-neutral solution ($\alpha =0$) and for the solutions for various weight parameters in Table~\ref{tab:infeasible}. The distribution functions of the optimal total cost for those experiments and the respective excess functions are displayed in Figure~\ref{fig:infeasible_distfunctions}.
{
\begin{table}[h]
\centering
\begin{tabular}{lcccc}
\hline
 Parameter & Expected cost & Distance  & Number of Iterations & CPU (sec.)\\
\hline
$\alpha=0$ & $2837.33$ & $79.69 $ & 2 & 13.2\\
$\alpha=0.5$ & $2839.64$ & $71.72$ & 3 & 20.2\\
$\alpha=1$ & $2842.67$ & $67.43$ & 1 & 6.66\\
$\alpha=5$ & $2923.03$ & $45.21$ & 9 & 60.5\\
$\mathrm{benchmark}$ & $2877.82$ & $0$ & - & -\\
\hline\\
\end{tabular}
\caption{Experiments with benchmark resulting in infeasibility}
\label{tab:infeasible}
\end{table}
\begin{center}
\begin{figure}
    \includegraphics[width=0.48\textwidth]{In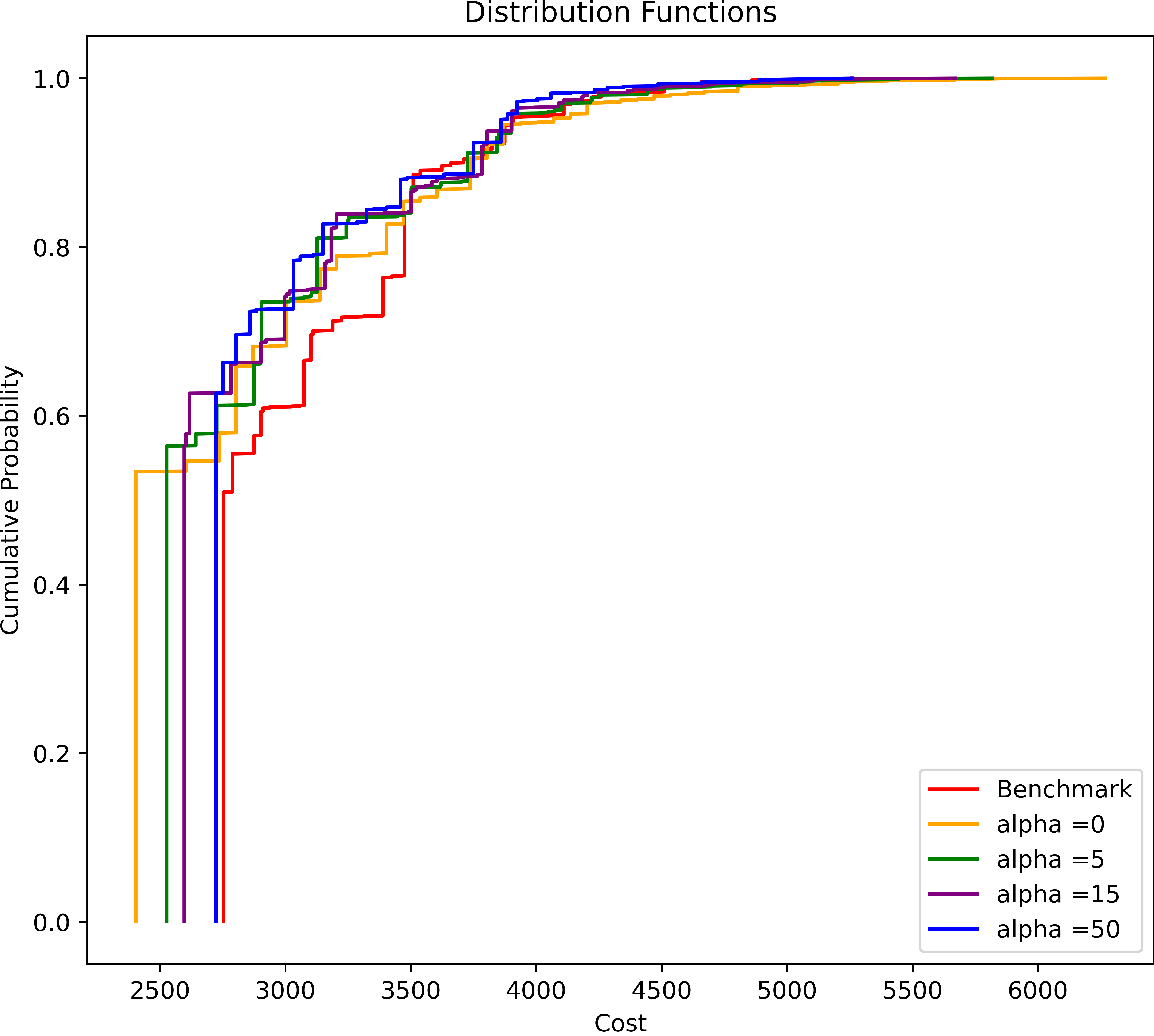} 
    \hspace{1ex}
    \includegraphics[width=0.48\textwidth]{In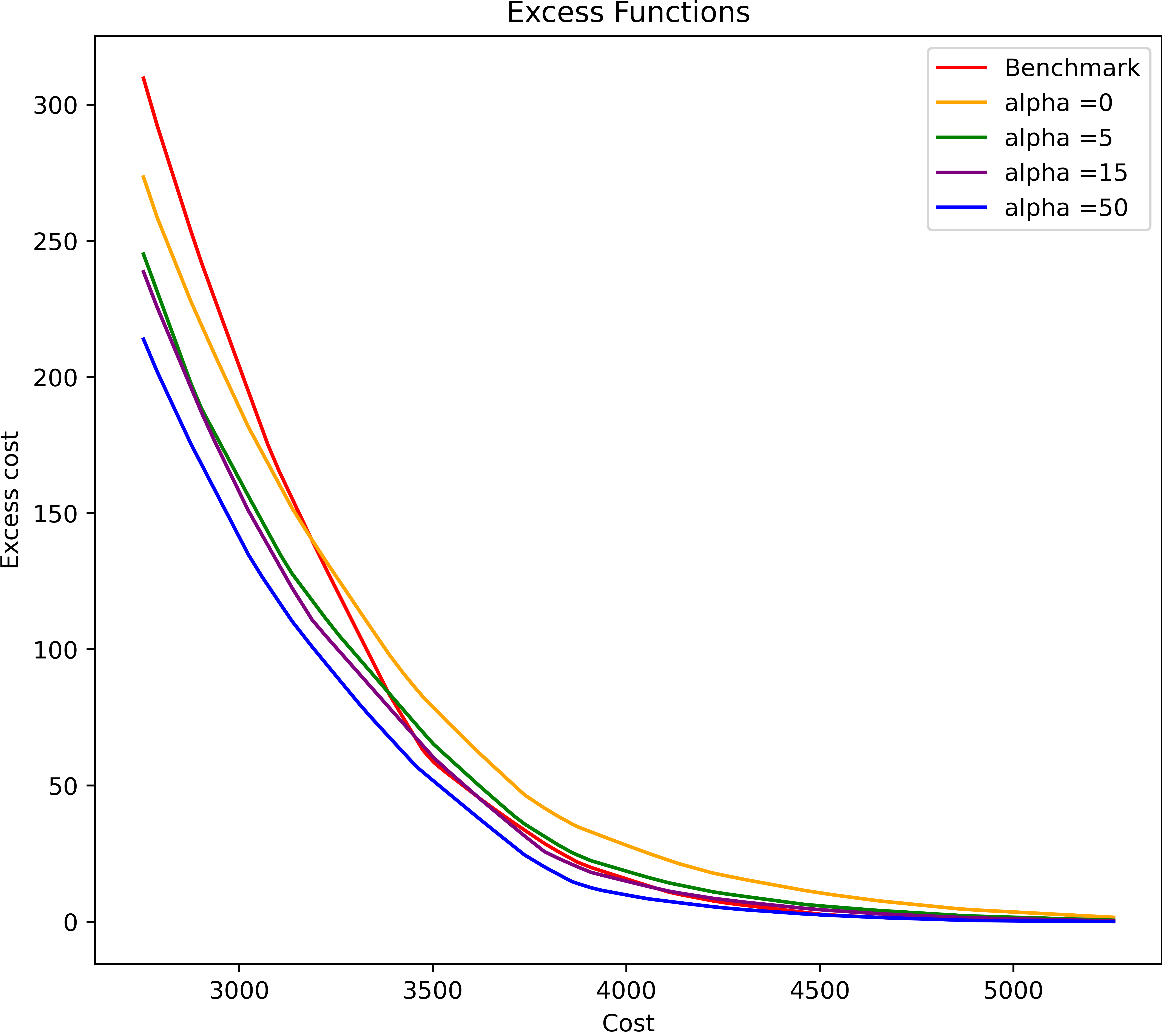} 
    \caption{Experiments with infeasible benchmark: cumulative distribution functions $F_Q$ and the excess function $\bar{F}_Q^{(2)}$ of the optimal total cost for different values of the penalty parameter. }
    \label{fig:infeasible_distfunctions}
\end{figure}
\end{center}
}
We also solved the problem with the penalty parameter $\alpha > 50$ and obtained solutions identical to the solution obtained with $\alpha=50.$ 

We conclude that the Transportation Distance Relaxation Method is a good way to relax a benchmark when the stochastic dominance constraint leads to infeasibility of the optimization problem. We also note that the computational effort of our method for the relaxed problem is comparable to the computational effort for solving problems with stochastic dominance constraint presented in \cite[Chapter 7]{DDARriskbook}.

\section*{Acknowledgments}
The authors thank the Associate Editor and the three anonymous referees whose comments helped improve the paper. 
The support of the Air Force Office of Scientific Research under award FA9550-24-1-0284 is gratefully acknowledged.

\bibliographystyle{plain}

\end{document}